\newtheorem{theorem}{Theorem}
\newtheorem{lemma}{Lemma}
\theoremstyle{definition}
\newtheorem{definition}{Definition}
\theoremstyle{remark}
\newtheorem{remark}{Remark}
\numberwithin{equation}{section}
\begin{document}

\setcounter{page}{1}

\begin{center}
{\Large \textbf{Approximation by $q$-Bernstein-Stancu-Kantorovich operators with
shifted knots of real parameters}}

\bigskip

\textbf{Mohammad Ayman Mursaleen}$^{1,2,\ast}$, \textbf{Adem Kilicman}$^{2}$ and \textbf{Md. Nasiruzzaman}$^{3}$\\
\bigskip
$^{1}$School of Information and Physical Sciences, \\The University of Newcastle, Callaghan, NSW 2308, Australia\\[0pt]
$^{2}$Department of Mathematics and Statistics, Faculty of Science, \\University Putra Malaysia, 43400 Serdang, Selangor, Malaysia\\[0pt]
$^{3}$Department of Mathematics, Faculty of Science, \\University of Tabuk,
PO Box 4279, Tabuk-71491, Saudi Arabia\\[0pt]
\bigskip
mohdaymanm@gmail.com; akilic@upm.edu.my; nasir3489@gmail.com
\end{center}
\bigskip
\textbf{Abstract}
Our main purpose of this article is to study the convergence and other
related properties of $q$-Bernstein-Kantorovich operators including the
shifted knots of real positive numbers. We design the shifted
knots of Bernstein-Kantorovich operators generated by the basic $q$%
-calculus. More precisely, we study the convergence properties of our new
operators in the space of continuous functions and Lebesgue space. We obtain
the degree of convergence with the help of modulus of continuity and
integral modulus of continuity. Furthermore, we establish the quantitative
estimates of Voronovskaja-type.

\bigskip

\noindent \textbf{2010 Mathematics Subject Classification:} Primary 41A25,
41A36; Secondary 33C45.\newline

\noindent \textbf{Key Words and Phrases:} Bernstein-Stancu type polynomials;
$q$-calculus; modulus of continuity; Peetre's $K$-functional; Korovkin's
theorem; Voronovskaja-type theorem.

\bigskip

\section{\textbf{Introduction and Basic Definitions }}

In 1912, S. N. Bernstein gave a very short proof of Weierstrass
approximation theorem by introducing the Bernstein polynomials \cite{bns}.
The $q$-analog of Bernstein polynomials were introduced by Lupa\c{s} \cite%
{lups} and Phillips \cite{phps}, separately. Since then several polynomials
were generalized and studied by using $q$-calculus. For example related to
our present theme, $q$-Bernstein shifted operators \cite{kjn}, $q$%
-Bernstein-Kantorovich operators \cite{mfa1}, Bernstein-Kantorovich
operators based on $(p,q)$-calculus \cite{maa}, and other related operators
\cite{maths} etc..

In 2004, the Stancu variant of Bernstein-Kantorovich operators \cite{barbosu}
were defined as follows:
\begin{equation}
\mathcal{J}_{r}^{\mu _{1},\nu _{1}}(g;\xi )=(r+1+\nu _{1})\sum_{s=0}^{r}%
\binom{m}{s}\xi ^{s}(1-\xi )^{r-s}\int_{\frac{s+\mu _{1}}{r+1+\nu _{1}}}^{%
\frac{s+1+\mu _{1}}{r+1+\nu _{1}}}f(t)\mathrm{d}t,\text{ }(0\leq \mu
_{1}\leq \nu _{1}).  \label{mnz-1}
\end{equation}

Mohiuddine \emph{et al} constructed a new family of Bernstein- Kantorovich
operators \cite{samtaaammas}. While the operators \eqref{mnz-1} with shifted
knots were studied by Gadjiev et al. \cite{gad} by defining the operators $%
\mathcal{S}_{r,\mu _{2}}^{\nu _{2}}:C[0,1]\rightarrow C\bigg{[}\frac{\mu _{2}%
}{r+\nu _{2}},\frac{r+\mu _{2}}{r+\nu _{2}}\bigg{]}$ as follows:
\begin{equation*}
\mathcal{S}_{r,\mu _{2}}^{\nu _{2}}(f;\xi )=\left( \frac{r+\nu _{2}}{r}%
\right) ^{r}\sum_{s=0}^{r}\binom{r}{s}\left( \xi -\frac{\mu _{2}}{r+\nu _{2}}%
\right) _{q}^{s}\left( \frac{r+\mu _{2}}{r+\nu _{2}}-\xi \right)
_{q}^{r-s}f\left( \frac{t+\mu }{r+\nu }\right) ,
\end{equation*}%
where $\frac{\mu _{2}}{r+\nu _{2}}\leq z\leq \frac{r+\mu _{2}}{r+\nu _{2}}$
and $\mu _{k},\nu _{k}(k=1,2)$ are positive real numbers satisfying $0\leq
\mu _{2}\leq \mu _{1}\leq \nu _{1}\leq \nu _{2}$. Moreover, Sucu et al. \cite%
{nfs} introduced the Bernstein-Stancu-Kantorovich operators\ with shifted
knots by:
\begin{eqnarray*}
\mathcal{K}_{r,\mu _{1},\nu _{1}}^{\mu _{2},\nu _{2}}(f;\xi ) &=&\left(
\frac{r+\nu _{2}+1}{r+1}\right) ^{r+1}\sum_{s=0}^{r}\left( \int_{\frac{r+\mu
_{1}}{r+1+\nu _{1}}}^{\frac{r+\mu _{1}+1}{r+1+\nu _{1}}}f(\xi )\mathrm{d}\xi
\right) \\
&\times &\binom{r}{s}\left( \xi -\frac{\mu _{2}}{r+\nu _{2}+1}\right)
^{s}\left( \frac{r+\mu _{2}+1}{r+\nu _{2}+1}-\xi \right) ^{r-s}.
\end{eqnarray*}

Here we introduce the Bernstein-Stancu-Kantorovich operators with shifted
knots by using $q$-integers and then investigate the approximation in the $%
L_{p}\;(p\geq 1)$ spaces. We obtain the degree of approximation of our new
operators by using the modulus of continuity and integral modulus of
continuity. We give some direct theorems in $L_{p}$ spaces. Furthermore, we
obtain the approximation in Lipschitz spaces and also establishe the
quantitative estimates of the Voronovskaja-type theorem. The $q$-calculus
emerged as a very useful tool and a very fruitful connection between
Mathematics and Physics. We recall here some basic definitions and facts
about the $q$-calculus. We take $\mathbb{C}$ as the set of complex numbers
and $\mathbb{N}$ the set of positive integers.\newline

\begin{definition}
\label{def1} \textrm{Let $q\in \mathbb{C}\setminus \{0,1\}.$ Then the $q$%
-number is defined by}

\begin{equation}  \label{q-number}
\left[\theta\right]_{q}=\left\{
\begin{array}{ll}
\dfrac{1-q^{\theta}}{1-q} & \qquad (\theta\in \mathbb{C}\setminus\{0\}) \\
&  \\
1 & \qquad (\theta=0) \\
&  \\
\sum\limits_{s=0}^{r-1}q^{s}=1+q+q^2+\cdots+q^{s} & \qquad (\theta\in
\mathbb{N}).%
\end{array}
\right.
\end{equation}
\end{definition}

\begin{definition}
\label{def2} \textrm{For number $q\in \mathbb{C}\setminus \{0,1\},$ the $q$%
-factorial is defined by}

\begin{equation}  \label{q-factorial}
\left[ \theta\right] _{q}!=\left\{
\begin{array}{ll}
1 & \qquad (\theta=0) \\
&  \\
\prod\limits_{\theta=1}^{r}\left[\theta\right]_{q} & \qquad (\theta\in
\mathbb{N}).%
\end{array}
\right.
\end{equation}
\end{definition}

\begin{definition}
\label{def2a} \textrm{For $q\in \mathbb{C}\setminus \{0,1\},$ and $0\leq
s\leq r$ the $q$-binomial coefficient is defined by}
\begin{equation*}
{\Huge [}_{{\Huge s}}^{{\Huge r}}{\Huge ]}{=}\frac{[r]_{q}!}{%
[s]_{q}!\;[r-s]_{q}!}.\text{ }
\end{equation*}
\end{definition}

The $q$-analogue of $(1+\xi )^{r}$ is the polynomial given by
\begin{equation}
(1+\xi )_{q}^{r-s}=\left\{
\begin{array}{ll}
(1+\xi )(1+q\xi )\cdots (1+q^{r-s-1}\xi ) & \qquad (r,s\in \mathbb{N}) \\
&  \\
1 & \qquad (r=s=0).%
\end{array}%
\right.
\end{equation}

\begin{definition}
\label{defin2} \cite{jkson,kak} The $q$-Jackson integral from $0$ to $A\in
\mathbb{R}$ is defined by

\begin{equation}
\int_{0}^{A}f(\xi )\mathrm{d}_{q}\xi =A(1-q)\sum_{s=0}^{\infty
}f(Aq^{s})q^{s},  \label{q-intial-2}
\end{equation}%
while, the more general $q$-Jackson integral on interval $[A,B]$ is given by

\begin{equation}
\int_{A}^Bf(z)\mathrm{d}_qz=\int_{0}^Bf(z)\mathrm{d}_qz- \int_{0}^Af(z)%
\mathrm{d}_qz.  \label{q-intial-3}
\end{equation}
\end{definition}

\begin{theorem}
\cite{kak} \textbf{(Fundamental theorem of $q$-calculus )} Let $\varphi $ be
the anti $q$-derivative of the function $f$, namely $D_{q}\varphi =f$, be
continuous at $z=A$. Then
\begin{equation}
\int_{A}^{B}f(\xi )\mathrm{d}_{q}\xi =\varphi (B)-\varphi (A),
\label{q-funamenal}
\end{equation}

\begin{equation}
D_{q}\left( \int_{A}^{\xi }f(t)\mathrm{d}_{q}t\right) =f(\xi ).
\label{q-funamenal-2}
\end{equation}
\end{theorem}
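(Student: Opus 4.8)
The statement comprises two identities, and the plan is to dispatch the differentiation formula \eqref{q-funamenal-2} first by a direct computation and then to leverage it to obtain the Newton--Leibniz formula \eqref{q-funamenal}. Throughout I work with $0<q<1$, so that the $q$-geometric iterates $\xi q^{s}$ tend to $0$ and the Jackson series \eqref{q-intial-2} converges (for $f$ bounded near the origin), and I use the $q$-derivative $D_{q}g(z)=\dfrac{g(qz)-g(z)}{(q-1)z}$ for $z\neq 0$.

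For \eqref{q-funamenal-2} I set $F(\xi)=\int_{0}^{\xi}f(t)\,\mathrm{d}_{q}t=\xi(1-q)\sum_{s=0}^{\infty}f(\xi q^{s})q^{s}$ via \eqref{q-intial-2}. I would then evaluate $F(q\xi)$, absorb the outer factor $q$ into the powers, and reindex the sum $s\mapsto s-1$ to get $F(q\xi)=\xi(1-q)\sum_{s=1}^{\infty}f(\xi q^{s})q^{s}$. Subtracting, every term with $s\geq 1$ cancels and only the $s=0$ term survives:
\[
F(\xi)-F(q\xi)=\xi(1-q)\Bigl(\sum_{s=0}^{\infty}f(\xi q^{s})q^{s}-\sum_{s=1}^{\infty}f(\xi q^{s})q^{s}\Bigr)=\xi(1-q)f(\xi).
\]
Dividing by $(q-1)\xi$ gives $D_{q}F(\xi)=f(\xi)$, which is exactly \eqref{q-funamenal-2}; the only care needed is the legitimacy of the index shift, justified by absolute convergence of the series.

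For \eqref{q-funamenal} I would first invoke the additivity \eqref{q-intial-3}, namely $\int_{A}^{B}f\,\mathrm{d}_{q}\xi=\int_{0}^{B}f\,\mathrm{d}_{q}\xi-\int_{0}^{A}f\,\mathrm{d}_{q}\xi$, reducing matters to expressing $\int_{0}^{\xi}f\,\mathrm{d}_{q}t$ through the given anti-$q$-derivative $\varphi$. By the previous step $F(\xi)=\int_{0}^{\xi}f\,\mathrm{d}_{q}t$ satisfies $D_{q}F=f=D_{q}\varphi$, hence $D_{q}(\varphi-F)=0$. Writing out the $q$-derivative, this means $(\varphi-F)(q\xi)=(\varphi-F)(\xi)$, so $\varphi-F$ is invariant under $\xi\mapsto q\xi$. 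Iterating yields $(\varphi-F)(\xi)=(\varphi-F)(q^{n}\xi)$ for all $n$; letting $n\to\infty$ and using $q^{n}\xi\to 0$ together with continuity of $\varphi$ at the origin (and the automatic continuity of $F$ there, visible from its series) forces $\varphi-F$ to be the constant $\varphi(0)-F(0)=\varphi(0)$. Thus $\int_{0}^{\xi}f\,\mathrm{d}_{q}t=\varphi(\xi)-\varphi(0)$, and substituting into the additivity relation cancels the $\varphi(0)$ terms and leaves $\int_{A}^{B}f\,\mathrm{d}_{q}\xi=\varphi(B)-\varphi(A)$.

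The main obstacle is this final step, where $q$-invariance must be upgraded to genuine constancy. Unlike in classical calculus, $D_{q}g=0$ does not by itself make $g$ constant; it only pins $g$ down along each $q$-orbit $\{q^{n}\xi\}$. The continuity hypothesis is precisely what closes this gap, and it is operative at the accumulation point of those orbits, namely $z=0$ for $0<q<1$, so that the passage to the limit along $q^{n}\xi\to0$ is valid. A secondary technical point is guaranteeing that the Jackson series defining $F$ converges and may be rearranged term by term, which is why one restricts to $|q|<1$ and to $f$ that is integrable and bounded near $0$.
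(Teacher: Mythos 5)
The paper does not actually prove this theorem: it is imported verbatim from Kac and Cheung \cite{kak} as a known fact, so there is no internal proof to compare against. Your argument is correct and is essentially the standard one from that reference: the telescoping computation $F(\xi)-F(q\xi)=\xi(1-q)f(\xi)$ for the Jackson integral $F(\xi)=\int_{0}^{\xi}f(t)\,\mathrm{d}_{q}t$ gives \eqref{q-funamenal-2} (the index shift is a harmless relabelling of a convergent series, not a rearrangement), and then $D_{q}(\varphi-F)=0$ plus iteration along the $q$-orbit and passage to the limit $q^{n}\xi\to 0$ gives $\varphi-F\equiv\varphi(0)$, whence \eqref{q-funamenal} via the additivity \eqref{q-intial-3}. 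You are also right to flag that $D_{q}g=0$ alone does not force constancy and that continuity at the orbit's accumulation point is what closes the gap. One point worth making explicit: the theorem as stated in the paper asks for continuity at $z=A$, whereas your proof (correctly, and in agreement with the original source) uses continuity of $\varphi$ at $0$, the only point where the geometric sequences $q^{n}\xi$ accumulate when $0<q<1$; for generic $A\neq 0$ continuity at $A$ would not suffice, so you have in effect repaired a misstated hypothesis rather than deviated from the intended argument. The only remaining technical caveat, which you acknowledge, is that convergence of the Jackson series and continuity of $F$ at $0$ require a mild growth condition on $f$ near the origin (e.g.\ $|f(x)x^{\alpha}|$ bounded for some $0\leq\alpha<1$), which is implicit in the source.
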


\section{\textbf{Construction of operators and basic properties}}

In this section, taking into account the operators introduced by \cite%
{barbosu,kant}, we focused on these operators and then construct the new
generalized operators in $L_{p}$ spaces by $q$-analogue and investigate the
convergence results. Thus, with the basic definitions of $q$-integers for $%
0<q<1$, we define new construction of Bernstein-Kantorovich polynomials with
shifted knots of positive real numbers $\mu _{s},\nu _{s}$ for $(s=1,2)$
with $\frac{\mu _{2}}{[r+1]_{q}+\nu _{2}}\leq \xi \leq \frac{\lbrack
r+1]_{q}+\mu _{2}}{[r+1]_{q}+\nu _{2}},$ provided $0\leq \mu _{2}\leq \mu
_{1}\leq \nu _{1}\leq \nu _{2}$. For this purpose we let $1\leq p<\infty ,\;%
\mathcal{J}_{r}=\left[ \frac{\mu _{2}}{[r+1]_{q}+\nu _{2}},\frac{%
[r+1]_{q}+\mu _{2}}{[r+1]_{q}+\nu _{2}}\right] $ and define the operators $%
\mathcal{B}_{r,q,\mu _{1},\nu _{1}}^{\mu _{2},\nu
_{2}}:L_{p}[0,1]\rightarrow L_{p}(\mathcal{J}_{r})$. Then for all $f\in
L_{p}[0,1]$ and $r=1,2,3,\cdots $ we construct the sequence of operators by
\begin{equation}
\mathcal{B}_{r,q,\mu _{1},\nu _{1}}^{\mu _{2},\nu _{2}}(f;\xi
)=([r+1]_{q}+\nu _{1})\left( \frac{[r+1]_{q}+\nu _{2}}{[r+1]_{q}}\right)
_{q}^{r+1}\sum_{s=0}^{r}P_{r,q,\mu _{1},\nu _{1}}^{\mu _{2},\nu _{2}}(\xi
)\int_{\frac{q[s]_{q}+\mu _{1}}{[r+1]_{q}+\nu _{1}}}^{\frac{[s+1]_{q}+\mu
_{1}}{[r+1]_{q}+\nu _{1}}}f(t)\mathrm{d}t,  \label{d-1}
\end{equation}%
where
\begin{equation*}
P_{r,q,\mu _{1},\nu _{1}}^{\mu _{2},\nu _{2}}(\xi )=\left[
\begin{array}{c}
r \\
s%
\end{array}%
\right] _{q}\left( \xi -\frac{\mu _{2}}{[r+1]_{q}+\nu _{2}}\right)
_{q}^{s}\prod_{i=0}^{r-s-1}\left( \frac{[r+1]_{q}+\mu _{2}}{[r+1]_{q}+\nu
_{2}}-q^{i}\xi \right)
\end{equation*}%
and
\begin{equation*}
\prod_{i=0}^{r-s-1}\left( \frac{[r+1]_{q}+\mu _{2}}{[r+1]_{q}+\nu _{2}}%
-q^{i}\xi \right) =\left( \frac{[r+1]_{q}+\mu _{2}}{[r+1]_{q}+\nu _{2}}-\xi
\right) _{q}^{r-s}.
\end{equation*}%
In addition, it is very clear that for $q=1,$ operators reduce to \cite{nfs}
and for $\mu _{2}=\nu _{2}=0$ with $q=1$ we get operators \eqref{mnz-1} by
\cite{barbosu} . If we take $\mu _{1}=\mu _{2}=\nu _{1}=\nu _{2}=0$ with $%
q=1,$ then the operators \eqref{mnz-1} reduce to the classic
Bernstein-Kantorovich operators \cite{kant}.\newline

\begin{lemma}
\label{sn} Take the test functions $\eta _{j}(t)=t^{j}$ for $j=0,1,2.$ Then
for all $\xi \in \mathcal{J}_{r}=\left[ \frac{\mu _{2}}{[r+1]_{q}+\nu _{2}},%
\frac{[r+1]_{q}+\mu _{2}}{[r+1]_{q}+\nu _{2}}\right] ,$ we have:
\begin{eqnarray*}
(1)\quad \mathcal{B}_{r,q,\mu _{1},\nu _{1}}^{\mu _{2},\nu _{2}}(\eta
_{0}(t);\xi ) &=&\left( \frac{[r+1]_{q}+\nu _{2}}{[r+1]_{q}}\right) _{q}; \\
(2)\quad \mathcal{B}_{r,q,\mu _{1},\nu _{1}}^{\mu _{2},\nu _{2}}(\eta
_{1}(t);\xi ) &=&\frac{2q}{[2]_{q}}\frac{[r]_{q}}{[r+1]_{q}+\nu _{1}}\left(
\frac{[r+1]_{q}+\nu _{2}}{[r+1]_{q}}\right) _{q}^{2}\left( \xi -\frac{\mu
_{2}}{[r+1]_{q}+\nu _{2}}\right) _{q} \\
&+&\frac{(1+2\mu _{1})}{[2]_{q}([r+1]_{q}+\nu _{1})}\left( \frac{%
[r+1]_{q}+\nu _{2}}{[r+1]_{q}}\right) _{q}; \\
(3)\quad \mathcal{B}_{r,q,\mu _{1},\nu _{1}}^{\mu _{2},\nu _{2}}(\eta
_{2}(t);\xi ) &=&\frac{3q^{3}}{[3]_{q}}\frac{[r]_{q}[r-1]_{q}}{%
([r+1]_{q}+\nu _{1})^{2}}\left( \frac{[r+1]_{q}+\nu _{2}}{[r+1]_{q}}\right)
_{q}^{3}\left( \xi -\frac{\mu _{2}}{[r+1]_{q}+\nu _{2}}\right) _{q}^{2} \\
&+&\frac{3q(q+1+2\mu _{1})}{[3]_{q}}\frac{[r]_{q}}{([r+1]_{q}+\nu _{1})^{2}}%
\left( \frac{[r+1]_{q}+\nu _{2}}{[r+1]_{q}}\right) _{q}^{2}\left( \xi -\frac{%
\mu _{2}}{[r+1]_{q}+\nu _{2}}\right) _{q} \\
&+&\frac{1}{[3]_{q}}\frac{(1+3\mu _{1}+3\mu _{1}^{2})}{([r+1]_{q}+\nu
_{1})^{2}}\left( \frac{[r+1]_{q}+\nu _{2}}{[r+1]_{q}}\right) _{q}.
\end{eqnarray*}
\end{lemma}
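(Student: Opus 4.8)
The plan is to use linearity of $\mathcal{B}_{r,q,\mu_1,\nu_1}^{\mu_2,\nu_2}$ and dispose of the three test functions one at a time, in each case splitting the work into an inner $q$-integration followed by an outer $q$-binomial summation. Write $N=[r+1]_q+\nu_1$, $A=\frac{\mu_2}{[r+1]_q+\nu_2}$ and $B=\frac{[r+1]_q+\mu_2}{[r+1]_q+\nu_2}$. Since $q[s]_q=[s+1]_q-1$, the integration limits are $a_s=\frac{[s+1]_q-1+\mu_1}{N}$ and $b_s=\frac{[s+1]_q+\mu_1}{N}$. The first step is to evaluate the inner ($q$-Jackson) integrals by the fundamental theorem of $q$-calculus \eqref{q-funamenal},
\[
\int_{a_s}^{b_s} t^{j}\,\mathrm{d}_q t=\frac{b_s^{\,j+1}-a_s^{\,j+1}}{[j+1]_q},\qquad j=0,1,2,
\]
and to expand the numerators as polynomials in $[s]_q$ using $[s+1]_q=1+q[s]_q$. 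This gives a constant for $j=0$, a linear expression in $[s]_q$ for $j=1$, and a quadratic expression in $[s]_q$ for $j=2$, each carrying explicit $\mu_1$- and $q$-dependent coefficients and an overall factor $1/[j+1]_q$; the $[2]_q,[3]_q$ visible in (2)--(3) come precisely from these denominators.

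Substituting back reduces everything to the three weighted basis sums $S_k=\sum_{s=0}^{r}[s]_q^{\,k}\,P_{r,q,\mu_1,\nu_1}^{\mu_2,\nu_2}(\xi)$ for $k=0,1,2$. The tools I would use are the $q$-binomial (partition) identity
\[
\sum_{s=0}^{r}\left[\begin{array}{c} r\\ s\end{array}\right]_q (\xi-A)_q^{s}\,(B-\xi)_q^{r-s}=(B-A)_q^{r},
\]
the absorption rule $[s]_q\left[\begin{smallmatrix} r\\ s\end{smallmatrix}\right]_q=[r]_q\left[\begin{smallmatrix} r-1\\ s-1\end{smallmatrix}\right]_q$ together with its iterate $[s]_q[s-1]_q\left[\begin{smallmatrix} r\\ s\end{smallmatrix}\right]_q=[r]_q[r-1]_q\left[\begin{smallmatrix} r-2\\ s-2\end{smallmatrix}\right]_q$, and the shift factorization $(\xi-A)_q^{s+k}=\big(\prod_{i=0}^{k-1}(\xi-q^iA)\big)(\xi-q^kA)_q^{s}$. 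For $S_1$ I would absorb one factor $[s]_q$, shift the index by one, and reapply the partition identity with $A$ replaced by $qA$, obtaining $S_1=[r]_q(\xi-A)(B-qA)_q^{r-1}$. For $S_2$ I would first use the clean relation $[s]_q^{2}=q\,[s]_q[s-1]_q+[s]_q$, handle the $[s]_q[s-1]_q$-sum by a double index shift and the partition identity with $A$ replaced by $q^2A$, and add back $S_1$; this yields a product expression of the same shape with $[r]_q[r-1]_q$ and $(\xi-A)(\xi-qA)=\left(\xi-\frac{\mu_2}{[r+1]_q+\nu_2}\right)_q^{2}$ out front.

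The last step is reassembly: multiply each summation outcome by the prefactor $N\left(\frac{[r+1]_q+\nu_2}{[r+1]_q}\right)_q^{r+1}$ and by $1/[j+1]_q$, check that the $N=[r+1]_q+\nu_1$ in the prefactor cancels the $N$'s generated by the integral limits, and verify that the residual products $(B-q^iA)_q^{r-i}$ collapse against the prefactor into the stated powers $\left(\frac{[r+1]_q+\nu_2}{[r+1]_q}\right)_q$, $\left(\frac{[r+1]_q+\nu_2}{[r+1]_q}\right)_q^{2}$ and $\left(\frac{[r+1]_q+\nu_2}{[r+1]_q}\right)_q^{3}$ appearing in (1)--(3). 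The $j=0$ and $j=1$ cases double as a warm-up and as a normalization check.

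The main obstacle will be the $\eta_2$ computation. There one must keep the $q$-shift of the knot parameter ($A\mapsto qA\mapsto q^2A$) synchronized with the double index shift, while simultaneously carrying the four parameters $\mu_1,\nu_1,\mu_2,\nu_2$ and the competing powers of $q$, and then confirm that the product $(B-q^2A)_q^{r-2}$ telescopes against the prefactor so as to leave exactly the compact coefficients $\frac{3q^3}{[3]_q}$, $\frac{3q(q+1+2\mu_1)}{[3]_q}$ and $\frac{1+3\mu_1+3\mu_1^2}{[3]_q}$. Getting this final collapse and the $\mu_1$-polynomial $1+3\mu_1+3\mu_1^2$ to match — rather than any individual step — is where the bookkeeping is most delicate.
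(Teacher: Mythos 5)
Your proposal is correct and follows essentially the same route as the paper: evaluate the inner $q$-Jackson integral in closed form (you via the fundamental theorem \eqref{q-funamenal}, the paper by summing the geometric series in the definition \eqref{q-intial-2} — equivalent computations), rewrite the result as a polynomial in $[s]_q$ via $[s+1]_q=1+q[s]_q$, and collapse the weighted sums $\sum_s[s]_q^kP_{r,q,\mu_1,\nu_1}^{\mu_2,\nu_2}(\xi)$ by the absorption identity, the knot shift $A\mapsto qA\mapsto q^2A$, and the partition/normalization identity. Your decomposition $[s]_q^2=q[s]_q[s-1]_q+[s]_q$ is just a reorganization of the paper's absorb-then-split-$[s+1]_q$ step, so no substantive difference remains.
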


\begin{proof}
From $q$-Jackson integral, clearly we have
\begin{eqnarray*}
\int_{\frac{q[s]_{q}+\mu _{1}}{[r+1]_{q}+\nu _{1}}}^{\frac{[s+1]_{q}+\mu _{1}%
}{[r+1]_{q}+\nu _{1}}}t^{\alpha }\mathrm{d}_{q}t &=&\int_{0}^{\frac{%
[s+1]_{q}+\mu _{1}}{[r+1]_{q}+\nu _{1}}}t^{\alpha }\mathrm{d}_{q}t-\int_{0}^{%
\frac{q[s]_{q}+\mu _{1}}{[r+1]_{q}+\nu _{1}}}t^{\alpha }\mathrm{d}_{q}t \\
&=&(1-q)\frac{[s+1]_{q}+\mu _{1}}{[r+1]_{q}+\nu _{1}}\sum_{m=0}^{\infty }%
\bigg{(}\frac{[s+1]_{q}+\mu _{1}}{[r+1]_{q}+\nu _{1}}q^{m}\bigg{)}^{\alpha
}q^{m} \\
&-&(1-q)\frac{q[s]_{q}+\mu _{1}}{[r+1]_{q}+\nu _{1}}\sum_{m=0}^{\infty }%
\bigg{(}\frac{q[s]_{q}+\mu _{1}}{[r+1]_{q}+\nu _{1}}q^{m}\bigg{)}^{\alpha
}q^{m} \\
&=&\frac{(1-q)}{([r+1]_{q}+\nu _{1})^{\alpha +1}}\bigg{(}([s+1]_{q}+\mu
_{1})^{\alpha +1}-(q[s]_{q}+\mu _{1})^{\alpha +1}\bigg{)}\sum_{m=0}^{\infty
}q^{m(1+\alpha )}.
\end{eqnarray*}%
Thus we easily get
\begin{equation}
\int_{\frac{q[s]_{q}+\mu _{1}}{[r+1]_{q}+\nu _{1}}}^{\frac{[s+1]_{q}+\mu _{1}%
}{[r+1]_{q}+\nu _{1}}}t^{\alpha }\mathrm{d}_{q}t=%
\begin{cases}
\frac{1}{[r+1]_{q}+\nu _{1}} & \quad \text{for }\;\alpha =0; \\
\frac{1}{[2]_{q}\left( [r+1]_{q}+\nu _{1}\right) ^{2}}\left( 1+2\mu
_{1}+2q[r]_{q}\right) & \quad \text{for }\;\alpha =1; \\
\frac{1}{[3]_{q}\left( [r+1]_{q}+\nu _{1}\right) ^{3}}\bigg{(}1+3\mu
_{1}+3\mu _{1}^{2}+3q(1+2\mu _{1})[r]_{q}+3q^{2}[r]_{q}^{2}\bigg{)} & \quad
\text{for }\;\alpha =2.%
\end{cases}
\label{kantro}
\end{equation}%
where we used $[s+1]_{q}=1+q[s]_{q}$.\newline

Thus in the view of \eqref{kantro} for $\alpha =0,1,2,\;\eta _{\alpha
}(t)=t^{\alpha },$ we get

\begin{eqnarray*}
\mathcal{B}_{r,q,\mu _{1},\nu _{1}}^{\mu _{2},\nu _{2}}(\eta _{0}(t);\xi )
&=&([r+1]_{q}+\nu _{1})\left( \frac{[r+1]_{q}+\nu _{2}}{[r+1]_{q}}\right)
_{q}^{r+1}\frac{1}{([r+1]_{q}+\nu _{1})}\sum_{s=0}^{r}P_{r,q,\mu _{1},\nu
_{1}}^{\mu _{2},\nu _{2}}(\xi ) \\
&=&\left( \frac{[r+1]_{q}+\nu _{2}}{[r+1]_{q}}\right) _{q}^{r+1}\left( \frac{%
[r+1]_{q}}{[r+1]_{q}+\nu _{2}}\right) _{q}^{r}.
\end{eqnarray*}
\begin{eqnarray*}
\mathcal{B}_{r,q,\mu _{1},\nu _{1}}^{\mu _{2},\nu _{2}}(\eta _{1}(t);\xi)
&=&\frac{(1+2\mu _{1})([r+1]_{q}+\nu _{1})}{[2]_{q}}\left( \frac{%
[r+1]_{q}+\nu _{2}}{[r+1]_{q}}\right) _{q}^{r+1}\frac{1}{([r+1]_{q}+\nu
_{1})^{2}}\sum_{s=0}^{r}P_{r,q,\mu _{1},\nu _{1}}^{\mu _{2},\nu _{2}}(\xi )
\\
&+&\frac{2q([r+1]_{q}+\nu _{1})}{[2]_{q}}\left( \frac{[r+1]_{q}+\nu _{2}}{%
[r+1]_{q}}\right) _{q}^{r+1}\frac{1}{([r+1]_{q}+\nu _{1})^{2}}%
\sum_{s=0}^{r}[s]_{q}P_{r,q,\mu _{1},\nu _{1}}^{\mu _{2},\nu _{2}}(\xi ) \\
&=&\frac{(1+2\mu _{1})}{[2]_{q}([r+1]_{q}+\nu _{1})}\left( \frac{%
[r+1]_{q}+\nu _{2}}{[r+1]_{q}}\right) _{q}^{r+1}\left( \frac{[r+1]_{q}}{%
[r+1]_{q}+\nu _{2}}\right) _{q}^{r} \\
&+&\frac{2q}{[2]_{q}}\frac{[r]_{q}}{([r+1]_{q}+\nu _{1})}\left( \frac{%
[r+1]_{q}+\nu _{2}}{[r+1]_{q}}\right) _{q}^{r+1}\sum_{s=0}^{r}P_{r-1,q,\mu
_{1},\nu _{1}}^{\mu _{2},\nu _{2}}(\xi )\left( \xi -\frac{\mu _{2}}{%
[r+1]_{q}+\nu _{2}}\right) _{q} \\
&=&\frac{(1+2\mu _{1})}{[2]_{q}([r+1]_{q}+\nu _{1})}\left( \frac{%
[r+1]_{q}+\nu _{2}}{[r+1]_{q}}\right) _{q} \\
&+&\frac{2q}{[2]_{q}}\frac{[r]_{q}}{([r+1]_{q}+\nu _{1})}\left( \frac{%
[r+1]_{q}+\nu _{2}}{[r+1]_{q}}\right) _{q}^{r+1}\left( \xi -\frac{\mu _{2}}{%
[r+1]_{q}+\nu _{2}}\right) _{q}\left( \frac{[r+1]_{q}}{[r+1]_{q}+\nu _{2}}%
\right) _{q}^{r-1};
\end{eqnarray*}
$\mathcal{B}_{r,q,\mu _{1},\nu _{1}}^{\mu _{2},\nu _{2}}(\eta _{2}(t);\xi )$
\begin{eqnarray*}
&=&\frac{1}{[3]_{q}}\frac{(1+3\mu _{1}+3\mu _{1}^{2})}{([r+1]_{q}+\nu
_{1})^{2}}\left( \frac{[r+1]_{q}+\nu _{2}}{[r+1]_{q}}\right)
_{q}^{r+1}\sum_{s=0}^{r}P_{r,q,\mu _{1},\nu _{1}}^{\mu _{2},\nu _{2}}(\xi )
\\
&+&\frac{3q(1+2\mu _{1})}{[3]_{q}}\frac{[r]_{q}}{([r+1]_{q}+\nu _{1})^{2}}%
\left( \frac{[r+1]_{q}+\nu _{2}}{[r+1]_{q}}\right)
_{q}^{r+1}\sum_{s=0}^{r}P_{r-1,q,\mu _{1},\nu _{1}}^{\mu _{2},\nu _{2}}(\xi
)\left( \xi -\frac{\mu _{2}}{[r+1]_{q}+\nu _{2}}\right) _{q} \\
&+&\frac{3q^{2}}{[3]_{q}}\frac{[r]_{q}}{([r+1]_{q}+\nu _{1})^{2}}\left(
\frac{[r+1]_{q}+\nu _{2}}{[r+1]_{q}}\right)
_{q}^{r+1}\sum_{s=0}^{r}[1+s]_{q}P_{r-1,q,\mu _{1},\nu _{1}}^{\mu _{2},\nu
_{2}}(\xi )\left( \xi -\frac{\mu _{2}}{[r+1]_{q}+\nu _{2}}\right) _{q} \\
&=&\frac{1}{[3]_{q}}\frac{(1+3\mu _{1}+3\mu _{1}^{2})}{([r+1]_{q}+\nu
_{1})^{2}}\left( \frac{[r+1]_{q}+\nu _{2}}{[r+1]_{q}}\right) _{q} \\
&+&\frac{3q(1+2\mu _{1})}{[3]_{q}}\frac{[r]_{q}}{([r+1]_{q}+\nu _{1})^{2}}%
\left( \frac{[r+1]_{q}+\nu _{2}}{[r+1]_{q}}\right) _{q}^{2}\left( \xi -\frac{%
\mu _{2}}{[r+1]_{q}+\nu _{2}}\right) _{q} \\
&+&\frac{3q^{2}}{[3]_{q}}\frac{[r]_{q}}{([r+1]_{q}+\nu _{1})^{2}}\left(
\frac{[r+1]_{q}+\nu _{2}}{[r+1]_{q}}\right)
_{q}^{r+1}\sum_{s=0}^{r}P_{r-1,q,\mu _{1},\nu _{1}}^{\mu _{2},\nu _{2}}(\xi
)\left( \xi -\frac{\mu _{2}}{[r+1]_{q}+\nu _{2}}\right) _{q} \\
&+&\frac{3q^{3}}{[3]_{q}}\frac{[r]_{q}[r-1]_{q}}{([r+1]_{q}+\nu _{1})^{2}}%
\left( \frac{[r+1]_{q}+\nu _{2}}{[r+1]_{q}}\right)
_{q}^{r+1}\sum_{s=0}^{r}P_{r-2,q,\mu _{1},\nu _{1}}^{\mu _{2},\nu _{2}}(\xi
)\left( \xi -\frac{\mu _{2}}{[r+1]_{q}+\nu _{2}}\right) _{q}^{2} \\
&=&\frac{1}{[3]_{q}}\frac{(1+3\mu _{1}+3\mu _{1}^{2})}{([r+1]_{q}+\nu
_{1})^{2}}\left( \frac{[r+1]_{q}+\nu _{2}}{[r+1]_{q}}\right) _{q} \\
&+&\frac{3q(1+2\mu _{1})}{[3]_{q}}\frac{[r]_{q}}{([r+1]_{q}+\nu _{1})^{2}}%
\left( \frac{[r+1]_{q}+\nu _{2}}{[r+1]_{q}}\right) _{q}^{2}\left( \xi -\frac{%
\mu _{2}}{[r+1]_{q}+\nu _{2}}\right) _{q} \\
&+&\frac{3q^{2}}{[3]_{q}}\frac{[r]_{q}}{([r+1]_{q}+\nu _{1})^{2}}\left(
\frac{[r+1]_{q}+\nu _{2}}{[r+1]_{q}}\right) _{q}^{2}\left( \xi -\frac{\mu
_{2}}{[r+1]_{q}+\nu _{2}}\right) _{q} \\
&+&\frac{3q^{3}}{[3]_{q}}\frac{[r]_{q}[r-1]_{q}}{([r+1]_{q}+\nu _{1})^{2}}%
\left( \frac{[r+1]_{q}+\nu _{2}}{[r+1]_{q}}\right) _{q}^{3}\left( \xi -\frac{%
\mu _{2}}{[r+1]_{q}+\nu _{2}}\right) _{q}^{2}.
\end{eqnarray*}

Which completes the proof of Lemma \ref{sn}.
\end{proof}

\begin{lemma}
\label{snlm1}For the operators $\mathcal{B}_{r,q,\mu _{1},\nu _{1}}^{\mu
_{2},\nu _{2}}(.~;~.)$ defined by \eqref{d-1}, we have the following central
moments:\newline
\newline
$(1)\quad \mathcal{B}_{r,q,\mu _{1},\nu _{1}}^{\mu _{2},\nu _{2}}(\eta
_{1}(t)-\xi ;\xi )$
\begin{eqnarray*}
&=&\bigg{[}\frac{2q}{[2]_{q}}\frac{[r]_{q}}{[r+1]_{q}+\nu _{1}}\left( \frac{%
[r+1]_{q}+\nu _{2}}{[r+1]_{q}}\right) _{q}^{2}-\left( \frac{[r+1]_{q}+\nu
_{2}}{[r+1]_{q}}\right) _{q}\bigg{]}\xi \\
&+&\bigg{[}\frac{1+2\mu _{1}}{[2]_{q}([r+1]_{q}+\nu _{1})}\left( \frac{%
[r+1]_{q}+\nu _{2}}{[r+1]_{q}}\right) _{q}-\frac{2q\mu _{2}}{[2]_{q}}\frac{%
[r]_{q}}{([r+1]_{q}+\nu _{1})([r+1]_{q}+\nu _{2})}\left( \frac{[r+1]_{q}+\nu
_{2}}{[r+1]_{q}}\right) _{q}^{2}\bigg{]};
\end{eqnarray*}%
$(2)\quad \mathcal{B}_{r,q,\mu _{1},\nu _{1}}^{\mu _{2},\nu _{2}}((\eta
_{2}(t)-\xi )^{2};\xi )$
\begin{eqnarray*}
&=&\bigg{[}\frac{3q^{4}}{[3]_{q}}\frac{[r]_{q}[r-1]_{q}}{([r+1]_{q}+\nu
_{1})^{2}}\left( \frac{[r+1]_{q}+\nu _{2}}{[r+1]_{q}}\right) _{q}^{3} \\
&-&\frac{4q}{[2]_{q}}\frac{[r]_{q}}{([r+1]_{q}+\nu _{1})}\left( \frac{%
[r+1]_{q}+\nu _{2}}{[r+1]_{q}}\right) _{q}^{2}+\left( \frac{[r+1]_{q}+\nu
_{2}}{[r+1]_{q}}\right) _{q}\bigg{]}\xi ^{2} \\
&+&\bigg{[}\frac{3q(q+1+2\mu _{1})}{[3]_{q}}\frac{[r]_{q}}{([r+1]_{q}+\nu
_{1})^{2}}\left( \frac{[r+1]_{q}+\nu _{2}}{[r+1]_{q}}\right) _{q}^{2} \\
&+&\frac{4q\mu _{2}}{[2]_{q}}\frac{[r]_{q}}{([r+1]_{q}+\nu
_{1})([r+1]_{q}+\nu _{2})}\left( \frac{[r+1]_{q}+\nu _{2}}{[r+1]_{q}}\right)
_{q}^{2} \\
&-&\frac{3q^{3}(1+q)\mu _{2}}{[3]_{q}([r+1]_{q}+\nu _{2})}\frac{%
[r]_{q}[r-1]_{q}}{([r+1]_{q}+\nu _{1})^{2}}\left( \frac{[r+1]_{q}+\nu _{2}}{%
[r+1]_{q}}\right) _{q}^{3} \\
&-&\frac{2(1+2\mu _{1})}{[2]_{q}([r+1]_{q}+\nu _{1})}\left( \frac{%
[r+1]_{q}+\nu _{2}}{[r+1]_{q}}\right) _{q}\bigg{]}\xi \\
&+&\frac{3q^{3}}{[3]_{q}}\frac{[r]_{q}[r-1]_{q}}{([r+1]_{q}+\nu _{1})^{2}}%
\frac{\mu _{2}}{([r+1]_{q}+\nu _{2})^{2}}\left( \frac{[r+1]_{q}+\nu _{2}}{%
[r+1]_{q}}\right) _{q}^{2} \\
&-&\frac{3q\mu _{2}(1+q+2\mu _{1})}{[3]_{q}([r+1]_{q}+\nu _{2})}\frac{[r]_{q}%
}{([r+1]_{q}+\nu _{1})^{2}}\left( \frac{[r+1]_{q}+\nu _{2}}{[r+1]_{q}}%
\right) _{q}^{2} \\
&+&\frac{1+3\mu _{1}+3\mu _{1}^{2}}{[3]_{q}([r+1]_{q}+\nu _{1})^{2}}\left(
\frac{[r+1]_{q}+\nu _{2}}{[r+1]_{q}}\right) _{q}.
\end{eqnarray*}
\end{lemma}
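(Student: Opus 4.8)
The plan is to derive both moments entirely from the linearity of the operator together with the three monomial images supplied by Lemma \ref{sn}; no further $q$-Jackson integration is needed. Throughout write $\mathcal{B}$ for $\mathcal{B}_{r,q,\mu_1,\nu_1}^{\mu_2,\nu_2}$, set $c=\dfrac{\mu_2}{[r+1]_q+\nu_2}$, and abbreviate $\Lambda=\left(\dfrac{[r+1]_q+\nu_2}{[r+1]_q}\right)_q$, so that Lemma \ref{sn} reads $\mathcal{B}(\eta_0;\xi)=\Lambda$ while $\mathcal{B}(\eta_1;\xi)$ and $\mathcal{B}(\eta_2;\xi)$ are affine, respectively quadratic, in the shifted $q$-power $\left(\xi-c\right)_q$.

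For part (1) I would use $\eta_1(t)-\xi=\eta_1(t)-\xi\,\eta_0(t)$ and linearity to write
\begin{equation*}
\mathcal{B}(\eta_1(t)-\xi;\xi)=\mathcal{B}(\eta_1;\xi)-\xi\,\mathcal{B}(\eta_0;\xi).
\end{equation*}
It is important to note that $\mathcal{B}(\eta_0;\xi)=\Lambda\neq1$ in general, so the subtracted term $-\xi\Lambda$ genuinely alters the coefficient of $\xi$. Substituting the two expressions of Lemma \ref{sn}, expanding the first-order $q$-power as $\left(\xi-c\right)_q=\xi-c$, and separating the coefficient of $\xi$ from the constant term produces exactly the two bracketed quantities claimed in (1).

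For part (2) the same device, applied to $(\eta_1(t)-\xi)^2=\eta_2(t)-2\xi\,\eta_1(t)+\xi^2\,\eta_0(t)$, gives
\begin{equation*}
\mathcal{B}\big((\eta_1(t)-\xi)^2;\xi\big)=\mathcal{B}(\eta_2;\xi)-2\xi\,\mathcal{B}(\eta_1;\xi)+\xi^2\,\mathcal{B}(\eta_0;\xi).
\end{equation*}
The one step that is not purely mechanical is the expansion of the second-order shifted power occurring in $\mathcal{B}(\eta_2;\xi)$. Following the convention fixed by the product formula for $P_{r,q,\mu_1,\nu_1}^{\mu_2,\nu_2}$ — in which the successive factors carry the ascending powers $q^0,q^1,\dots$ on the variable — one has
\begin{equation*}
\left(\xi-c\right)_q^2=(\xi-c)(q\xi-c)=q\,\xi^2-(1+q)c\,\xi+c^2,
\end{equation*}
so that the leading coefficient $q$ of $\xi^2$ supplies the \emph{extra} factor of $q$ that turns the $q^3$ of Lemma \ref{sn} into the $q^4$ recorded in (2). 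After inserting this expansion, one substitutes all three images and sorts the outcome into the three columns $\xi^2$, $\xi$, and $\xi^0$.

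The only real labour lies in the bookkeeping for the coefficients of $\xi$ and of $\xi^0$ in (2): several contributions of equal degree must be merged, namely the linear and constant parts produced when $\left(\xi-c\right)_q^2$ and $\left(\xi-c\right)_q$ are expanded inside $\mathcal{B}(\eta_2;\xi)$, the cross terms $2c$ and $-2$ acting on the two parts of $-2\xi\,\mathcal{B}(\eta_1;\xi)$, and the weights $c^2,\,-c,\,1$ distributed over the three coefficients of $\mathcal{B}(\eta_2;\xi)$. I would keep the $\xi^2$-, $\xi$- and $\xi^0$-columns separate and sum only at the end, using the $\xi^2$ coefficient — which has the fewest summands and already fixes the $q$-power convention — as a consistency check before tackling the heavier linear and constant columns. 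This is where a sign, or a stray power of $q$ or of $\Lambda$, is most likely to slip in, so it warrants the most care.
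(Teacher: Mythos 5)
Your proposal is correct and is exactly the derivation the paper intends (the paper states Lemma \ref{snlm1} without proof, but it follows from Lemma \ref{sn} precisely by the linearity decompositions $\mathcal{B}(\eta_1-\xi;\xi)=\mathcal{B}(\eta_1;\xi)-\xi\,\mathcal{B}(\eta_0;\xi)$ and $\mathcal{B}((\eta_1-\xi)^2;\xi)=\mathcal{B}(\eta_2;\xi)-2\xi\,\mathcal{B}(\eta_1;\xi)+\xi^2\,\mathcal{B}(\eta_0;\xi)$ that you write down, and your reading of $(\xi-c)_q^2=(\xi-c)(q\xi-c)$ is the one that makes the stated $\xi^2$-coefficient $3q^4/[3]_q$ consistent with the $3q^3/[3]_q$ of Lemma \ref{sn}). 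One remark: carrying your expansion through the constant column gives $\frac{3q^{3}}{[3]_{q}}\frac{[r]_{q}[r-1]_{q}}{([r+1]_{q}+\nu _{1})^{2}}\frac{\mu _{2}^{2}}{([r+1]_{q}+\nu _{2})^{2}}\left( \frac{[r+1]_{q}+\nu _{2}}{[r+1]_{q}}\right) _{q}^{3}$ for the first constant term, i.e.\ $\mu_2^2$ and the third $q$-power, rather than the $\mu_2$ and square printed in the lemma; your version is the internally consistent one, so this is a typo in the statement rather than a flaw in your argument.
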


\begin{theorem}
\label{tm-1} Let $z\in \mathcal{J}_{r}.$ Then for every $f\in C(\mathcal{J}%
_{r}),$ we get
\begin{equation*}
\lim_{r\rightarrow \infty }||\mathcal{B}_{r,q,\mu _{1},\nu _{1}}^{\mu
_{2},\nu _{2}}(f;\xi )-f(\xi )||_{C(\mathcal{J}_{r})}=0.
\end{equation*}%
where $\mathcal{J}_{r}=\left[ \frac{\mu _{2}}{[r+1]_{q}+\nu _{2}},\frac{%
[r+1]_{q}+\mu _{2}}{[r+1]_{q}+\nu _{2}}\right] ,$ and $C(\mathcal{J}_{r})$
be the set of all continuous function $f$ on $\mathcal{J}_{r}$.
\end{theorem}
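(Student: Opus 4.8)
The plan is to recognize Theorem \ref{tm-1} as a Korovkin-type (Bohman--Korovkin) statement and to deduce it from the moment computations already in hand. Writing $\mathcal{B}_r$ for $\mathcal{B}_{r,q,\mu_1,\nu_1}^{\mu_2,\nu_2}$ for brevity, each $\mathcal{B}_r$ is manifestly a positive linear operator: the basis functions $P_{r,q,\mu_1,\nu_1}^{\mu_2,\nu_2}(\xi)$ are nonnegative for $\xi\in\mathcal{J}_r$, and the Jackson integral of a nonnegative function is nonnegative, so the only thing that must be checked is that the images of the three test functions $\eta_0,\eta_1,\eta_2$ converge uniformly to $\eta_0,\eta_1,\eta_2$ on $\mathcal{J}_r$. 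Throughout I read $q=q_r$ as a sequence with $q_r\to1^-$ and $[r+1]_{q_r}\to\infty$ as $r\to\infty$, which is what makes the endpoints of $\mathcal{J}_r$ tend to $0$ and $1$ and the leading moment constants tend to the correct values; with a genuinely fixed $q\in(0,1)$ one would instead have $\mathcal{B}_r(\eta_0;\xi)\to1+\nu_2(1-q)\neq1$, and the conclusion would fail.

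First I would pass to the limit in Lemma \ref{sn}. Part (1) gives $\mathcal{B}_r(\eta_0;\xi)$ as a constant in $\xi$, equal after telescoping to $\tfrac{[r+1]_q+\nu_2}{[r+1]_q}$, which tends to $1$. For $\eta_1$ and $\eta_2$ I would feed in the elementary limits $\tfrac{2q}{[2]_q}\to1$, $\tfrac{3q^3}{[3]_q}\to1$, $\tfrac{[r]_q}{[r+1]_q+\nu_1}\to1$, $\tfrac{[r]_q[r-1]_q}{([r+1]_q+\nu_1)^2}\to1$, and $\tfrac{\mu_2}{[r+1]_q+\nu_2}\to0$, together with the vanishing of every coefficient carrying a factor $([r+1]_q+\nu_1)^{-1}$. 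This turns the expressions in parts (2) and (3) into $\xi$ and $\xi^2$ in the limit, and since the convergence of each coefficient is independent of $\xi$, the convergence is uniform on $\mathcal{J}_r$. Lemma \ref{snlm1}(2) packages exactly the combination I need: it gives $\mathcal{B}_r((\eta_1(t)-\xi)^2;\xi)\to0$ uniformly, which is the single quantitative fact driving everything.

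With the three test-function limits in place, I would invoke the Korovkin theorem to conclude $\|\mathcal{B}_r(f;\xi)-f(\xi)\|_{C(\mathcal{J}_r)}\to0$ for every $f\in C(\mathcal{J}_r)$. Equivalently, and to keep the argument self-contained, I would run the standard direct estimate
\[
|\mathcal{B}_r(f;\xi)-f(\xi)| \le |f(\xi)|\,\big|\mathcal{B}_r(\eta_0;\xi)-1\big| + \omega(f,\delta)\Big(\mathcal{B}_r(\eta_0;\xi) + \delta^{-2}\,\mathcal{B}_r((\eta_1(t)-\xi)^2;\xi)\Big),
\]
which follows from positivity together with $|f(t)-f(\xi)|\le\omega(f,\delta)\big(1+\delta^{-2}(t-\xi)^2\big)$; choosing $\delta=\delta_r:=\big(\mathcal{B}_r((\eta_1(t)-\xi)^2;\xi)\big)^{1/2}\to0$ makes both terms vanish uniformly by Lemma \ref{snlm1}(2) and Lemma \ref{sn}(1).

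The one genuine obstacle is that the domain $\mathcal{J}_r$ moves with $r$, so the classical Korovkin theorem on a fixed compact interval does not apply verbatim; one needs the shifted-knots version of the Korovkin theorem (in the spirit of Gadjiev et al.\ \cite{gad}), in which the test-function convergence is demanded uniformly on the sliding intervals $\mathcal{J}_r\to[0,1]$. The direct modulus-of-continuity estimate above is attractive precisely because it sidesteps this issue: it is a pointwise bound at each $\xi\in\mathcal{J}_r$, so taking the supremum over $\mathcal{J}_r$ afterwards is harmless and no fixed-domain Korovkin machinery is invoked. The care needed in letting $q_r\to1$, so that the moment limits are the claimed values rather than $q$-dependent constants, is the only other point at which one must be attentive.
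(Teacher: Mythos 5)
Your argument is essentially the paper's: both reduce Theorem \ref{tm-1} to the uniform convergence of the three test moments computed in Lemma \ref{sn} and then conclude by a Korovkin-type argument, so the mathematical content is the same. The one structural difference is how the $r$-dependent domain $\mathcal{J}_{r}$ is handled: the paper extends $\mathcal{B}_{r,q,\mu _{1},\nu _{1}}^{\mu _{2},\nu _{2}}$ to an operator $\mathcal{C}_{r,q,\mu _{1},\nu _{1}}^{\mu _{2},\nu _{2}}$ on all of $[0,1]$ by setting it equal to $f$ off $\mathcal{J}_{r}$ (see \eqref{tt}), notes that the two sup-norms coincide, and applies the classical Korovkin theorem on the fixed interval $[0,1]$, whereas you either appeal to a sliding-interval Korovkin theorem in the spirit of \cite{gad} or, preferably, run the self-contained pointwise bound $|\mathcal{B}_{r,q,\mu _{1},\nu _{1}}^{\mu _{2},\nu _{2}}(f;\xi )-f(\xi )|\leq |f(\xi )|\,|\mathcal{B}_{r,q,\mu _{1},\nu _{1}}^{\mu _{2},\nu _{2}}(\eta _{0};\xi )-1|+\tilde{\omega}(f,\delta )\big(\mathcal{B}_{r,q,\mu _{1},\nu _{1}}^{\mu _{2},\nu _{2}}(\eta _{0};\xi )+\delta ^{-2}\mathcal{B}_{r,q,\mu _{1},\nu _{1}}^{\mu _{2},\nu _{2}}((\eta _{1}(t)-\xi )^{2};\xi )\big)$ and then take the supremum over $\mathcal{J}_{r}$; your route avoids the extension device and additionally yields a rate. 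Both are valid. The genuinely valuable point in your write-up is the hypothesis you add: the limits in \eqref{3.1} require $q=q_{r}\rightarrow 1^{-}$ with $[r+1]_{q_{r}}\rightarrow \infty $. For a fixed $q\in (0,1)$ one has $[r+1]_{q}\rightarrow (1-q)^{-1}$, so the zeroth moment, which is essentially $([r+1]_{q}+\nu _{2})/[r+1]_{q}$, tends to $1+\nu _{2}(1-q)\neq 1$ when $\nu _{2}>0$, the interval $\mathcal{J}_{r}$ does not expand to $[0,1]$, and the theorem fails as literally stated; the paper asserts \eqref{3.1} without this assumption, so your version is the more careful one.
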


\begin{proof}
In the view of Lemma \ref{sn} for $\kappa =0,1,2,$ we easily get

\begin{equation}
\lim_{r\rightarrow \infty }\max_{\xi \in \mathcal{J}_{r}}|\mathcal{B}%
_{r,q,\mu _{1},\nu _{1}}^{\mu _{2},\nu _{2}}(\eta _{\kappa }(t);\xi )-\xi
^{\kappa }|=0.  \label{3.1}
\end{equation}%
We construct operators $\mathcal{C}_{r,q,\mu _{1},\nu }^{\mu _{2},\nu _{2}}$
as below:
\begin{equation}
\mathcal{C}_{r,q,\mu _{1},\nu _{1}}^{\mu _{2},\nu _{2}}(f;\xi )=%
\begin{cases}
\mathcal{B}_{r,q,\mu _{1},\nu _{1}}^{\mu _{2},\nu _{2}}(f;\xi ) & \quad
\text{if }\;\xi \in \mathcal{J}_{r}, \\
f(\xi ) & \quad \text{if }\;\xi \in \lbrack 0,1]\setminus \mathcal{J}_{r}.%
\end{cases}
\label{tt}
\end{equation}%
Then, it is easy to see that
\begin{equation}
||\mathcal{C}_{r,q,\mu _{1},\nu _{1}}^{\mu _{2},\nu _{2}}(f;\xi )-f(\xi
)||_{C[0,1]}=\max_{\xi \in \mathcal{J}_{r}}|\mathcal{B}_{r,q,\mu _{1},\nu
_{1}}^{\mu _{2},\nu _{2}}(f;\xi )-f(\xi )|.  \label{3.3}
\end{equation}

It is obvious to see from \eqref{3.1} and \eqref{3.3} that
\begin{equation*}
\lim_{r\rightarrow \infty }||\mathcal{C}_{r,q,\mu _{1},\nu _{1}}^{\mu
_{2},\nu _{2}}(\eta _{\kappa }(t);\xi )-\xi ^{\kappa }||_{C[0,1]}=0,\;\kappa
=0,1,2.
\end{equation*}%
On applying the well-known Korovkin's theorem, we get
\begin{equation*}
\lim_{r\rightarrow \infty }||\mathcal{C}_{r,q,\mu _{1},\nu _{1}}^{\mu
_{2},\nu _{2}}(f;\xi )-f(\xi )||_{C[0,1]}=0.
\end{equation*}%
Therefore, from \eqref{3.3} we get
\begin{equation*}
\lim_{r\rightarrow \infty }\max_{\xi \in \mathcal{J}_{r}}|\mathcal{C}%
_{r,q,\mu _{1},\nu _{1}}^{\mu _{2},\nu _{2}}(f;\xi )-f(\xi )|=0.
\end{equation*}%
This completes proof of Theorem \ref{tm-1}.
\end{proof}

\begin{theorem}
\label{sm-1} For every $f\in L_{p}[0,1],\;p\geq 1,$ operators \eqref{d-1}
satisfy
\begin{equation*}
\lim_{r\rightarrow \infty }||\mathcal{B}_{r,q,\mu _{1},\nu _{1}}^{\mu
_{2},\nu _{2}}(f;\xi )-f(\xi )||_{L_{P}[0,1]}=0.
\end{equation*}
\end{theorem}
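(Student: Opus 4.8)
The plan is to deduce the $L_p$-statement from a uniform $L_p$-bound on the operators together with the density of $C[0,1]$ in $L_p[0,1]$ and the uniform convergence already proved in Theorem \ref{tm-1}. Abbreviate $\mathcal{B}_r:=\mathcal{B}_{r,q,\mu_1,\nu_1}^{\mu_2,\nu_2}$ and $P_{r,s}(\xi):=P_{r,q,\mu_1,\nu_1}^{\mu_2,\nu_2}(\xi)$ (the latter depending on $s$), and write $\lambda_r=[r+1]_q+\nu_1$, $C_r=\left(\frac{[r+1]_q+\nu_2}{[r+1]_q}\right)_q^{r+1}$, together with the knots $a_s=\frac{q[s]_q+\mu_1}{\lambda_r}$ and $b_s=\frac{[s+1]_q+\mu_1}{\lambda_r}$. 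Using $[s+1]_q=1+q[s]_q$ one gets $b_s-a_s=\lambda_r^{-1}$, so $\lambda_r\int_{a_s}^{b_s}(\cdot)\,dt$ is the average of a function over $[a_s,b_s]$. By Lemma \ref{sn}(1) the quantity $\beta_r:=\mathcal{B}_r(\eta_0;\xi)$ is independent of $\xi$ and bounded in $r$; since $P_{r,s}(\xi)\ge0$ for $\xi\in\mathcal{J}_r$, the numbers $\pi_s(\xi):=C_rP_{r,s}(\xi)/\beta_r$ are nonnegative and, because $C_r\sum_sP_{r,s}(\xi)=\beta_r$, they satisfy $\sum_s\pi_s(\xi)=1$.

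The first and central step is the uniform bound $\|\mathcal{B}_r f\|_{L_p(\mathcal{J}_r)}\le M_0\|f\|_{L_p[0,1]}$. I would start from the pointwise inequality $|\mathcal{B}_r(f;\xi)|^p\le\beta_r^{p-1}\,\mathcal{B}_r(|f|^p;\xi)$: writing $\mathcal{B}_r(f;\xi)=\beta_r\sum_s\pi_s(\xi)\big(\lambda_r\int_{a_s}^{b_s}f\big)$, Jensen's inequality for the average over $[a_s,b_s]$ gives $\big|\lambda_r\int_{a_s}^{b_s}f\big|^p\le\lambda_r\int_{a_s}^{b_s}|f|^p$, and a second application of Jensen to the probability weights $\pi_s(\xi)$ yields the claim. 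Integrating over $\xi\in\mathcal{J}_r$ and applying Tonelli's theorem to the nonnegative integrand gives
\[
\int_{\mathcal{J}_r}\mathcal{B}_r(|f|^p;\xi)\,d\xi=\int_0^1|f(t)|^p\,K_r(t)\,dt,\qquad K_r(t):=\lambda_rC_r\sum_{s=0}^r\mathbf{1}_{[a_s,b_s]}(t)\int_{\mathcal{J}_r}P_{r,s}(\xi)\,d\xi.
\]

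The main obstacle is to prove $M:=\sup_r\|K_r\|_{L_\infty[0,1]}<\infty$. This rests on two properties of the $q$-Bernstein–Stancu basis: that $\lambda_rC_r\int_{\mathcal{J}_r}P_{r,s}(\xi)\,d\xi$ is bounded uniformly in $r$ and $s$ (a $q$-Beta–type integral, which one evaluates as $\lambda_r^{-1}$ times a bounded factor, paralleling the classical value $\tfrac1{r+1}$), and that for each fixed $t$ only boundedly many indices $s$ satisfy $t\in[a_s,b_s]$ (the knots overlap for $q<1$, but with multiplicity bounded independently of $r$ as $q\to1$). Granting $\|K_r\|_\infty\le M$, the displayed identity gives $\int_{\mathcal{J}_r}\mathcal{B}_r(|f|^p;\xi)\,d\xi\le M\|f\|_{L_p[0,1]}^p$, and combining this with the pointwise inequality yields $\|\mathcal{B}_rf\|_{L_p(\mathcal{J}_r)}\le(\beta_r^{\,p-1}M)^{1/p}\|f\|_{L_p[0,1]}=:M_0\|f\|_{L_p[0,1]}$ with $M_0$ independent of $r$.

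With the uniform bound in hand the conclusion follows by a standard three-$\varepsilon$ argument, working with the extension $\mathcal{C}_{r,q,\mu_1,\nu_1}^{\mu_2,\nu_2}$ from \eqref{tt} (so that $\|\mathcal{C}_rf-f\|_{L_p[0,1]}=\|\mathcal{B}_rf-f\|_{L_p(\mathcal{J}_r)}$, using $\mathcal{J}_r\subseteq[0,1]$). Fix $f\in L_p[0,1]$ and $\varepsilon>0$, and choose $g\in C[0,1]$ with $\|f-g\|_{L_p[0,1]}<\varepsilon$, possible by density of $C[0,1]$ in $L_p[0,1]$. Then
\[
\|\mathcal{B}_rf-f\|_{L_p(\mathcal{J}_r)}\le\|\mathcal{B}_r(f-g)\|_{L_p(\mathcal{J}_r)}+\|\mathcal{B}_rg-g\|_{L_p(\mathcal{J}_r)}+\|g-f\|_{L_p(\mathcal{J}_r)}.
\]
The first term is at most $M_0\varepsilon$ by the uniform bound and the third is at most $\varepsilon$; for the middle term, Theorem \ref{tm-1} gives $\|\mathcal{B}_rg-g\|_{C(\mathcal{J}_r)}\to0$, and since $|\mathcal{J}_r|\le1$ we have $\|\mathcal{B}_rg-g\|_{L_p(\mathcal{J}_r)}\le\|\mathcal{B}_rg-g\|_{C(\mathcal{J}_r)}\to0$. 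Hence $\limsup_{r\to\infty}\|\mathcal{B}_rf-f\|_{L_p(\mathcal{J}_r)}\le(M_0+1)\varepsilon$, and letting $\varepsilon\to0$ completes the proof.
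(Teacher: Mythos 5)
Your overall architecture is the same as the paper's: a pointwise Jensen inequality $|\mathcal{B}_{r,q,\mu _{1},\nu _{1}}^{\mu _{2},\nu _{2}}(f;\xi )|^{p}\leq \beta _{r}^{p-1}\,\mathcal{B}_{r,q,\mu _{1},\nu _{1}}^{\mu _{2},\nu _{2}}(|f|^{p};\xi )$, integration over $\mathcal{J}_{r}$ to get a uniform $L_{p}$-operator bound, and then a three-$\varepsilon$ argument combining density of continuous functions (the paper invokes Luzin, you invoke density of $C[0,1]$ --- same effect) with the uniform convergence of Theorem \ref{tm-1}. Your final limiting argument and the reduction via the extension operators $\mathcal{C}_{r,q,\mu _{1},\nu _{1}}^{\mu _{2},\nu _{2}}$ are fine.

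The genuine gap is that the one step you yourself identify as ``the main obstacle,'' namely $\sup_{r}\Vert K_{r}\Vert _{L_{\infty }[0,1]}<\infty $, is never proved: you write ``granting $\Vert K_{r}\Vert _{\infty }\leq M$'' and move on, so the proposal is an outline of a proof rather than a proof. Worse, the heuristic you offer for it --- that the intervals $[a_{s},b_{s}]$ have overlap multiplicity bounded independently of $r$ --- fails for fixed $q\in (0,1)$: each interval has length $1/\lambda _{r}$ with $\lambda _{r}=[r+1]_{q}+\nu _{1}\leq \frac{1}{1-q}+\nu _{1}$ bounded, there are $r+1$ of them, and they all sit inside $[0,1]$, so their total length $(r+1)/\lambda _{r}$ tends to infinity and the covering multiplicity must grow at least linearly in $r$. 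Hence the bound on $K_{r}$ cannot come from bounded overlap alone; it would have to come from the decay of the mass $\lambda _{r}C_{r}\int_{\mathcal{J}_{r}}P_{r,q,\mu _{1},\nu _{1}}^{\mu _{2},\nu _{2}}(\xi )\,\mathrm{d}\xi $ in $s$, or from letting $q=q_{r}\rightarrow 1$, neither of which you establish. To be fair, the paper's own proof silently commits the same error at the step $\sum_{s}\int_{a_{s}}^{b_{s}}|f(t)|^{p}\,\mathrm{d}t\leq \Vert f\Vert _{L_{p}[0,1]}^{p}$, which presumes the subintervals are essentially disjoint; you have at least correctly isolated where the difficulty lies, but as written your argument does not close it.
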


\begin{proof}
In order to prove the result, we consider Theorem \ref{tm-1} and operators $%
\mathcal{C}_{r,q,\mu _{1},\nu _{1}}^{\mu _{2},\nu _{2}}$ by \eqref{tt}. By
Luzin theorem, for a given $\epsilon >0,$ a continuous function $\psi $ on $%
[0,1]$ exits and satisfies $||f-\psi ||_{L_{p}[0,1]}<\epsilon $. Then for
all $r\in \mathbb{N},$ it is enough to prove that there exits a number $R>0$
such that $||\mathcal{C}_{r,q,\mu _{1},\nu _{1}}^{\mu _{2},\nu
_{2}}||_{L_{P}[0,1]}\leq R$. For this purpose, taking into account Theorem %
\ref{tm-1}, for given $\epsilon >0,$ there exits a positive number $r_{0}\in
\mathbb{N}$ with $r\geq r_{0}$ satisfying $||\mathcal{C}_{r,q,\mu _{1},\nu
_{1}}^{\mu _{2},\nu _{2}}(\psi ;\xi )-\psi (\xi )||_{L_{P}[0,1]}<\epsilon $.
Consider the inequality

\begin{eqnarray}
||\mathcal{C}_{r,q,\mu _{1},\nu _{1}}^{\mu _{2},\nu _{2}}(f;\xi )-f(\xi
)||_{L_{p}[0,1]} &\leq &||\mathcal{C}_{r,q,\mu _{1},\nu _{1}}^{\mu _{2},\nu
_{2}}(\psi ;\xi )-\psi (\xi )||_{C[0,1]}+||f-\psi ||_{L_{p}[0,1]}  \notag
\label{mnn-1} \\
&+&||\mathcal{C}_{r,q,\mu _{1},\nu _{1}}^{\mu _{2},\nu _{2}}(f;\xi )-%
\mathcal{C}_{r,q,\mu _{1},\nu _{1}}^{\mu _{2},\nu _{2}}(\psi ;\xi
)||_{L_{p}[0,1]}.
\end{eqnarray}

Apply the Jensen's inequality to operators \eqref{d-1}, we immediately get%
\newline
$|\mathcal{B}_{r,q,\mu _{1},\nu _{1}}^{\mu _{2},\nu _{2}}(f;\xi )|^{p}$
\begin{eqnarray*}
&\leq &\bigg{\{}([r+1]_{q}+\nu _{1})\left( \frac{[r+1]_{q}+\nu _{2}}{%
[r+1]_{q}}\right) _{q}^{r+1}\sum_{s=0}^{r}P_{r,q,\mu _{1},\nu _{1}}^{\mu
_{2},\nu _{2}}(\xi )\int_{\frac{q[s]_{q}+\mu _{1}}{[r+1]_{q}+\nu _{1}}}^{%
\frac{[s+1]_{q}+\mu _{1}}{[r+1]_{q}+\nu _{1}}}|f(t)|\mathrm{d}t\bigg{\}}^{p}
\\
&\leq &\sum_{s=0}^{r}\left( \frac{[r+1]_{q}+\nu _{2}}{[r+1]_{q}}\right)
_{q}^{r}P_{r,q,\mu _{1},\nu _{1}}^{\mu _{2},\nu _{2}}(\xi )\bigg{\{}%
([r+1]_{q}+\nu _{1})\left( \frac{[r+1]_{q}+\nu _{2}}{[r+1]_{q}}\right)
_{q}\int_{\frac{q[s]_{q}+\mu _{1}}{[r+1]_{q}+\nu _{1}}}^{\frac{[s+1]_{q}+\mu
_{1}}{[r+1]_{q}+\nu _{1}}}|f(t)|\mathrm{d}t\bigg{\}}^{p} \\
&\leq &\sum_{s=0}^{r}\left( \frac{[r+1]_{q}+\nu _{2}}{[r+1]_{q}}\right)
_{q}^{r}P_{r,q,\mu _{1},\nu _{1}}^{\mu _{2},\nu _{2}}(\xi )([r+1]_{q}+\nu
_{1})\left( \frac{[r+1]_{q}+\nu _{2}}{[r+1]_{q}}\right) _{q}^{p}\int_{\frac{%
q[s]_{q}+\mu _{1}}{[r+1]_{q}+\nu _{1}}}^{\frac{[s+1]_{q}+\mu _{1}}{%
[r+1]_{q}+\nu _{1}}}|f(t)|^{p}\mathrm{d}t.
\end{eqnarray*}%
Taking integral over $\mathcal{J}_{r}=\left[ \frac{\mu _{2}}{[r+1]_{q}+\nu
_{2}},\frac{[r+1]_{q}+\mu _{2}}{[r+1]_{q}+\nu _{2}}\right] ,$ we obtain

\begin{eqnarray*}
\int_{\mathcal{J}_{r}}|\mathcal{B}_{r,q,\mu _{1},\nu _{1}}^{\mu _{2},\nu
_{2}}(f;\xi )|^{p} &\leq &\sum_{s=0}^{r}\left( \frac{[r+1]_{q}+\nu _{2}}{%
[r+1]_{q}}\right) _{q}^{r}\left( \frac{[r+1]_{q}}{[r+1]_{q}+\nu _{2}}\right)
_{q}^{r+1}\frac{1}{[r+1]_{q}} \\
&\times &([r+1]_{q}+\nu _{1})\left( \frac{[r+1]_{q}+\nu _{2}}{[r+1]_{q}}%
\right) _{q}^{p}\int_{\frac{q[s]_{q}+\mu _{1}}{[r+1]_{q}+\nu _{1}}}^{\frac{%
[s+1]_{q}+\mu _{1}}{[r+1]_{q}+\nu _{1}}}|f(t)|^{p}\mathrm{d}t \\
&=&\frac{[r+1]_{q}+\nu _{1}}{[r+1]_{q}}\left( \frac{[r+1]_{q}+\nu _{2}}{%
[r+1]_{q}}\right) _{q}^{p-1}\sum_{s=0}^{r}\int_{\frac{q[s]_{q}+\mu _{1}}{%
[r+1]_{q}+\nu _{1}}}^{\frac{[s+1]_{q}+\mu _{1}}{[r+1]_{q}+\nu _{1}}%
}|f(t)|^{p}\mathrm{d}t \\
&\leq &\left( \frac{[r+1]_{q}+\nu _{2}}{[r+1]_{q}}\right)
_{q}^{p}||f||_{L_{p}[0,1]}^{p}.
\end{eqnarray*}

On the other side, from \eqref{tt}, if we use the inequality $%
\int_{[0,1]\setminus \mathcal{J}_{r}}|f(\xi )|^{p}\mathrm{d}z\leq
||f||_{L_{p}[0,1]}^{p}$. Then we conclude that
\begin{equation}
\int_{0}^{1}|\mathcal{C}_{r,q,\mu _{1},\nu _{1}}^{\mu _{2},\nu _{2}}(f;\xi
)|^{p}\mathrm{d}z\leq \bigg{[}1+\left( \frac{[r+1]_{q}+\nu _{2}}{[r+1]_{q}}%
\right) _{q}^{p}\bigg{]}||f||_{L_{p}[0,1]}^{p}.  \label{tn}
\end{equation}%
Thus

\begin{equation*}
||\mathcal{C}_{r,q,\mu _{1},\nu _{1}}^{\mu _{2},\nu
_{2}}(f)||_{L_{p}[0,1]}\leq (2+\nu _{2})||g||_{L_{p}[0,1]}\leq
R||f||_{L_{p}[0,1]}.
\end{equation*}%
Therefore, for all $r\in \mathbb{N},$ if $||\mathcal{C}_{r,q,\mu _{1},\nu
_{1}}^{\mu _{2},\nu _{2}}||_{L_{P}[0,1]}\leq R,$ then \eqref{mnn-1} gives us

\begin{eqnarray*}
||\mathcal{C}_{r,q,\mu _{1},\nu _{1}}^{\mu _{2},\nu
_{2}}(f)-f||_{L_{p}[0,1]} &\leq &||\mathcal{C}_{r,q,\mu _{1},\nu _{1}}^{\mu
_{2},\nu _{2}}(\psi )-\psi ||_{L_{p}[0,1]} \\
&+&||\mathcal{C}_{r,q,\mu _{1},\nu _{1}}^{\mu _{2},\nu _{2}}||\;||f-\psi
||_{L_{p}[0,1]}+||f-\psi ||_{L_{p}[0,1]} \\
&\leq &\epsilon R+2\epsilon .
\end{eqnarray*}%
Similarly, we also have

\begin{eqnarray*}
||\mathcal{C}_{r,q,\mu _{1},\nu _{1}}^{\mu _{2},\nu
_{2}}(f)-f||_{L_{p}[0,1]} &=&\bigg{(}\int_{0}^{1}|\mathcal{C}_{r,q,\mu
_{1},\nu _{1}}^{\mu _{2},\nu _{2}}(f;\xi )|^{p}\mathrm{d}z\bigg{)}^{\frac{1}{%
p}} \\
&=&\bigg{(}\int_{\mathcal{J}_{r}}|\mathcal{B}_{r,q,\mu _{1},\nu _{1}}^{\mu
_{2},\nu _{2}}(f;\xi )-f(\xi )|^{p}\mathrm{d}z\bigg{)}^{\frac{1}{p}} \\
&=&||\mathcal{B}_{r,q,\mu _{1},\nu _{1}}^{\mu _{2},\nu _{2}}(f;\xi )-f(\xi
)||_{L_{p}(\mathcal{J}_{r})} \\
&\leq &\epsilon R+2\epsilon .
\end{eqnarray*}%
Thus we get $\lim_{r\rightarrow \infty }||\mathcal{B}_{r,q,\mu _{1},\nu
_{1}}^{\mu _{2},\nu _{2}}(f;\xi )-f(\xi )||_{L_{p}(\mathcal{J}_{r})}=0$.
This completes the proof of Theorem \ref{sm-1}.
\end{proof}

\section{\textbf{Convergence properties of operators $\mathcal{B}_{r,q,%
\protect\mu _{1},\protect\nu _{1}}^{\protect\mu _{2},\protect\nu _{2}}$ }}

We write $\tilde{C}[0,1]$ for the space of uniformly continuous functions on
$[0,1]$ and $E_{f}=\{f~~~\big{ |}~~~f\in \tilde{C}[0,1]\}$. For $\tilde{%
\delta}>0$, let $\tilde{\omega}(f;\tilde{\delta})$ be the modulus of
continuity of the function $f$ of order one and $\lim_{\tilde{\delta}%
\rightarrow 0+}\tilde{\omega}(f;\tilde{\delta})=0$ . Thus we immediately see%
\newline
\begin{equation}
\tilde{\omega}(f;\tilde{\delta})=\sup_{\mid t_{1}-t_{2}\mid \leq \tilde{%
\delta}}\mid f(t_{1})-f(t_{2})\mid ;~~~t_{1},t_{2}\in \lbrack 0,1],
\label{snson1}
\end{equation}%
\begin{equation}
\mid f(t_{1})-f(t_{2})\mid \leq \left( 1+\frac{\mid t_{1}-t_{2}\mid }{\tilde{%
\delta}}\right) \tilde{\omega}(f;\tilde{\delta}).  \label{snson2}
\end{equation}

\begin{theorem}
\cite{snh} \label{ttm-1} Let $[u,v]\subseteq [x,y]$ and $\{K\}_{r \geq 1}$
be the sequence of positive linear operators acting from $C[x,y]$ to $C[u,v]$%
. Then

\begin{enumerate}
\item if $f\in C[x,y]$ and $\xi \in \lbrack u,v],$ then we have
\begin{eqnarray*}
|K_{r}(f;\xi )-f(\xi )| &\leq &|f(\xi )||K_{r}(\eta _{0}(t);\xi )-1| \\
&+&\big{\{}K_{r}(\eta _{0}(t);\xi )+\frac{1}{\tilde{\delta}}\sqrt{%
K_{r}((\eta _{1}(t)-\xi )^{2};\xi )}\sqrt{K_{r}(\eta _{0}(t);\xi )}\big{\}}%
\tilde{\omega}(f;\tilde{\delta}),
\end{eqnarray*}

\item for any $f^{\prime }\in C[x,y]$ and $\xi \in \lbrack u,v],$ we have
\begin{eqnarray*}
|K_{r}(f;\xi )-f(\xi )| &\leq &|f(\xi )||K_{r}(\eta _{0}(t);\xi
)-1|+|f^{\prime }(\xi )||K_{r}(\eta _{1}(t)-\xi ;\xi )| \\
&+&K_{r}((\eta _{1}(t)-\xi )^{2};\xi )\big{\{}\sqrt{K_{r}(\eta _{0}(t);\xi )}%
+\frac{1}{\tilde{\delta}}\sqrt{K_{r}((\eta _{1}(t)-\xi )^{2};\xi )}\big{\}}%
\tilde{\omega}(f^{\prime };\tilde{\delta}).
\end{eqnarray*}
\end{enumerate}
\end{theorem}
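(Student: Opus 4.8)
The plan is to derive both inequalities from a single error decomposition. Writing $f(t)=\bigl(f(t)-f(\xi)\bigr)+f(\xi)$ and using linearity together with $K_{r}(f(\xi);\xi)=f(\xi)K_{r}(\eta_{0}(t);\xi)$, I obtain
\[
K_{r}(f;\xi)-f(\xi)=K_{r}\bigl(f(t)-f(\xi);\xi\bigr)+f(\xi)\bigl(K_{r}(\eta_{0}(t);\xi)-1\bigr).
\]
Taking absolute values and the triangle inequality isolates the term $|f(\xi)|\,|K_{r}(\eta_{0}(t);\xi)-1|$ common to (1) and (2), so the whole problem reduces to estimating $K_{r}\bigl(|f(t)-f(\xi)|;\xi\bigr)$ in case (1), and the operator acting on a Taylor remainder in case (2). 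Throughout I use only positivity and linearity of $K_{r}$ and the Cauchy--Schwarz inequality for positive linear operators, $K_{r}(uv;\xi)\le\sqrt{K_{r}(u^{2};\xi)}\,\sqrt{K_{r}(v^{2};\xi)}$.

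For part (1) I would feed the modulus inequality \eqref{snson2}, i.e.\ $|f(t)-f(\xi)|\le\bigl(1+\tilde{\delta}^{-1}|t-\xi|\bigr)\tilde{\omega}(f;\tilde{\delta})$, into $K_{r}$. Positivity and linearity give
\[
K_{r}\bigl(|f(t)-f(\xi)|;\xi\bigr)\le\tilde{\omega}(f;\tilde{\delta})\Bigl(K_{r}(\eta_{0}(t);\xi)+\tfrac{1}{\tilde{\delta}}K_{r}(|t-\xi|;\xi)\Bigr).
\]
The only nonlinear step is Cauchy--Schwarz applied with $u=|t-\xi|$ and $v=1$, which yields $K_{r}(|t-\xi|;\xi)\le\sqrt{K_{r}((\eta_{1}(t)-\xi)^{2};\xi)}\,\sqrt{K_{r}(\eta_{0}(t);\xi)}$. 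Substituting reproduces the bracketed factor in (1) verbatim, so this part closes cleanly.

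For part (2) the hypothesis $f'\in C[x,y]$ lets me peel off the linear term first via $f(t)-f(\xi)=f'(\xi)(t-\xi)+\int_{\xi}^{t}\bigl(f'(u)-f'(\xi)\bigr)\,du$. Applying $K_{r}$ produces the term $|f'(\xi)|\,|K_{r}(\eta_{1}(t)-\xi;\xi)|$ plus the operator acting on the remainder. Using \eqref{snson2} for $f'$ and $|u-\xi|\le|t-\xi|$ on the segment of integration bounds the remainder by $\bigl(|t-\xi|+\tilde{\delta}^{-1}(t-\xi)^{2}\bigr)\tilde{\omega}(f';\tilde{\delta})$ up to a constant; a second application of Cauchy--Schwarz to $K_{r}(|t-\xi|;\xi)$ then collects the two contributions into the factor $\sqrt{K_{r}(\eta_{0}(t);\xi)}+\tilde{\delta}^{-1}\sqrt{K_{r}((\eta_{1}(t)-\xi)^{2};\xi)}$ multiplying a power of the second moment $K_{r}((\eta_{1}(t)-\xi)^{2};\xi)$. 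I expect the main obstacle here to be exactly this remainder estimate: controlling $\int_{\xi}^{t}(f'(u)-f'(\xi))\,du$ sharply enough, and arranging the grouping so that the second-moment factor appears precisely as written in (2), is the delicate bookkeeping step, whereas the decomposition and the Cauchy--Schwarz passes are routine.
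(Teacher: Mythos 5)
The paper offers no proof of Theorem \ref{ttm-1}: it is quoted from Shisha and Mond \cite{snh}, so your attempt can only be judged against that source and against the statement itself. Your treatment of part (1) is correct and is exactly the classical argument: the decomposition $K_{r}(f;\xi)-f(\xi)=K_{r}(f(t)-f(\xi);\xi)+f(\xi)(K_{r}(\eta_{0}(t);\xi)-1)$, the modulus inequality \eqref{snson2}, and one Cauchy--Schwarz application to $K_{r}(|t-\xi|;\xi)$ reproduce the stated bound verbatim; nothing is missing there.

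Part (2) is where the problem lies, and your hedge ``a power of the second moment'' conceals it. Carrying out your own outline, the Taylor remainder is bounded by $\bigl(|t-\xi|+\tfrac{1}{2\tilde{\delta}}(t-\xi)^{2}\bigr)\tilde{\omega}(f';\tilde{\delta})$, and after applying $K_{r}$ and Cauchy--Schwarz you obtain
\[
\sqrt{K_{r}((\eta_{1}(t)-\xi )^{2};\xi )}\Bigl\{\sqrt{K_{r}(\eta _{0}(t);\xi )}+\tfrac{1}{\tilde{\delta}}\sqrt{K_{r}((\eta _{1}(t)-\xi )^{2};\xi )}\Bigr\}\tilde{\omega}(f';\tilde{\delta}),
\]
i.e.\ the brace is multiplied by the \emph{square root} of the second moment, which is the form actually proved in \cite{snh}. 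The statement as printed multiplies the brace by $K_{r}((\eta _{1}(t)-\xi )^{2};\xi )$ itself; since this quantity is typically $o(1)$, the printed bound is strictly stronger than what your argument (or Shisha--Mond's) yields, and it is in fact false: for the Bernstein operators, where $K_{r}(\eta_{0})=1$ and the second moment is $\xi(1-\xi)/r$, taking $\tilde{\delta}=1$ the printed inequality would give $\|B_{r}f-f\|=O(1/r)$ for every $f\in C^{1}[0,1]$, contradicting the known $r^{-1/2}$ saturation on the class $\|f'\|_{\infty}\le 1$. So your proof is the right proof of the correct theorem, but it does not (and cannot) establish the inequality as transcribed here; you should flag the missing square root rather than leave the exponent unspecified.
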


\begin{theorem}
\label{ttm-2} For all $f\in E_{f}$ and $\xi \in \mathcal{J}_{r}$ operators %
\eqref{d-1} satisfy the inequality
\begin{equation*}
|\mathcal{B}_{r,q,\mu _{1},\nu _{1}}^{\mu _{2},\nu _{2}}(f;\xi )-f(\xi
)|\leq \frac{\nu _{2}}{[r+1]_{q}}|f(\xi )|+2\left( \frac{[r+1]_{q}+\nu _{2}}{%
[r+1]_{q}}\right) _{q}\tilde{\omega}\left( f;\sqrt{\tilde{\delta}_{r,q,\mu
_{1},\nu _{1}}^{\mu _{2},\nu _{2}}(\xi )}\right) ,
\end{equation*}%
where $\tilde{\delta}_{r,q,\mu _{1},\nu _{1}}^{\mu _{2},\nu _{2}}(\xi
)=\left( \frac{[r+1]_{q}}{[r+1]_{q}+\nu _{2}}\right) _{q}\mathcal{B}%
_{r,q,\mu _{1},\nu _{1}}^{\mu _{2},\nu _{2}}((\eta _{1}(t)-\xi )^{2};\xi )$.
\end{theorem}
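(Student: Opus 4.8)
The plan is to invoke the Gadjiev-type quantitative estimate of Theorem \ref{ttm-1}(1) directly, applied to the positive linear operators $K_r=\mathcal{B}_{r,q,\mu_1,\nu_1}^{\mu_2,\nu_2}$ acting from $C[0,1]$ into $C(\mathcal{J}_r)$ (note $\mathcal{J}_r\subseteq[0,1]$ since $\mu_2\leq\nu_2$), and then to simplify the resulting right-hand side by means of the moment formulas of Lemmas \ref{sn}--\ref{snlm1} together with a single well-chosen value of $\tilde\delta$. First I would record that $\mathcal{B}_{r,q,\mu_1,\nu_1}^{\mu_2,\nu_2}$ is positive on $\mathcal{J}_r$: for $0<q<1$ and $\xi\in\mathcal{J}_r$ each constituent factor of $P_{r,q,\mu_1,\nu_1}^{\mu_2,\nu_2}(\xi)$ is nonnegative, since $\xi\geq\tfrac{\mu_2}{[r+1]_q+\nu_2}$ and $q^i\xi\leq\xi\leq\tfrac{[r+1]_q+\mu_2}{[r+1]_q+\nu_2}$, so the integral against $f\geq 0$ is nonnegative, while linearity is clear from \eqref{d-1}.

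With the hypotheses of Theorem \ref{ttm-1} in force, part (1) gives, for every $f\in E_f$ and $\xi\in\mathcal{J}_r$,
$$|\mathcal{B}_{r,q,\mu_1,\nu_1}^{\mu_2,\nu_2}(f;\xi)-f(\xi)|\leq|f(\xi)|\,|A-1|+\Big\{A+\tfrac{1}{\tilde\delta}\sqrt{\mathcal{B}_{r,q,\mu_1,\nu_1}^{\mu_2,\nu_2}((\eta_1(t)-\xi)^2;\xi)}\,\sqrt{A}\Big\}\tilde\omega(f;\tilde\delta),$$
where I abbreviate $A:=\mathcal{B}_{r,q,\mu_1,\nu_1}^{\mu_2,\nu_2}(\eta_0(t);\xi)$. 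By Lemma \ref{sn}(1) one has $A=\big(\tfrac{[r+1]_q+\nu_2}{[r+1]_q}\big)_q=1+\tfrac{\nu_2}{[r+1]_q}$, so that $|A-1|=\tfrac{\nu_2}{[r+1]_q}$ and the first summand is exactly the term $\tfrac{\nu_2}{[r+1]_q}|f(\xi)|$ appearing in the claim.

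The only delicate point is the choice of scale, and it is dictated by the definition of $\tilde\delta_{r,q,\mu_1,\nu_1}^{\mu_2,\nu_2}(\xi)$. I would set $\tilde\delta=\sqrt{\tilde\delta_{r,q,\mu_1,\nu_1}^{\mu_2,\nu_2}(\xi)}$, that is $\tilde\delta^2=\big(\tfrac{[r+1]_q}{[r+1]_q+\nu_2}\big)_q\,\mathcal{B}_{r,q,\mu_1,\nu_1}^{\mu_2,\nu_2}((\eta_1(t)-\xi)^2;\xi)=\tfrac{1}{A}\,\mathcal{B}_{r,q,\mu_1,\nu_1}^{\mu_2,\nu_2}((\eta_1(t)-\xi)^2;\xi)$, using that $\big(\tfrac{[r+1]_q}{[r+1]_q+\nu_2}\big)_q=1/A$. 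Consequently $\mathcal{B}_{r,q,\mu_1,\nu_1}^{\mu_2,\nu_2}((\eta_1(t)-\xi)^2;\xi)=A\tilde\delta^2$, and the bracketed factor collapses:
$$A+\tfrac{1}{\tilde\delta}\sqrt{A\tilde\delta^2}\,\sqrt{A}=A+A=2A=2\Big(\tfrac{[r+1]_q+\nu_2}{[r+1]_q}\Big)_q.$$
Substituting this back produces precisely the second term of the asserted inequality, and adding the two contributions finishes the argument. I expect no genuine obstacle: the second central moment is nonnegative, being the image of the square $(\eta_1(t)-\xi)^2$ under the positive operator just identified, so $\tilde\delta$ is well defined and the square roots are legitimate; nothing beyond Theorem \ref{ttm-1}(1), the positivity noted above, and the value $A=1+\tfrac{\nu_2}{[r+1]_q}$ from Lemma \ref{sn}(1) is required.
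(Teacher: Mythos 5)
Your proposal is correct and follows essentially the same route as the paper: apply Theorem \ref{ttm-1}(1) to $\mathcal{B}_{r,q,\mu_1,\nu_1}^{\mu_2,\nu_2}$, read off $\mathcal{B}_{r,q,\mu_1,\nu_1}^{\mu_2,\nu_2}(\eta_0;\xi)$ from Lemma \ref{sn}, and choose $\tilde\delta=\sqrt{\tilde\delta_{r,q,\mu_1,\nu_1}^{\mu_2,\nu_2}(\xi)}$ so the bracketed factor collapses to $2\left(\frac{[r+1]_q+\nu_2}{[r+1]_q}\right)_q$. Your write-up actually makes explicit the algebraic collapse $A+\tfrac{1}{\tilde\delta}\sqrt{A\tilde\delta^2}\sqrt{A}=2A$ and the positivity check, both of which the paper leaves implicit.
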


\begin{proof}
If we consider the (1) of theorem \ref{ttm-1} and lemma \ref{snlm1}, we can
write
\begin{eqnarray*}
|\mathcal{B}_{r,q,\mu _{1},\nu _{1}}^{\mu _{2},\nu _{2}}(f;\xi )-f(\xi )|
&\leq &|f(\xi )||\mathcal{B}_{r,q,\mu _{1},\nu _{1}}^{\mu _{2},\nu
_{2}}(\eta _{0}(t);\xi )-1|+\bigg{\{}\mathcal{B}_{r,q,\mu _{1},\nu
_{1}}^{\mu _{2},\nu _{2}}(\eta _{0}(t);\xi ) \\
&+&\frac{1}{\tilde{\delta}}\sqrt{\mathcal{B}_{r,q,\mu _{1},\nu _{1}}^{\mu
_{2},\nu _{2}}((\eta _{1}(t)-\xi )^{2};z)}\sqrt{\mathcal{B}_{r,q,\mu
_{1},\nu _{1}}^{\mu _{2},\nu _{2}}(\eta _{0}(t);\xi )}\bigg{\}}\tilde{\omega}%
(f;\tilde{\delta})
\end{eqnarray*}%
If we choose $\tilde{\delta}=\sqrt{\tilde{\delta}_{r,q,\mu _{1},\nu
_{1}}^{\mu _{2},\nu _{2}}(\xi )}=\sqrt{\left( \frac{[r+1]_{q}}{[r+1]_{q}+\nu
_{2}}\right) _{q}}\sqrt{\mathcal{B}_{r,q,\mu _{1},\nu _{1}}^{\mu _{2},\nu
_{2}}((\eta _{1}(t)-\xi )^{2};\xi )},$ then we get
\begin{equation*}
|\mathcal{B}_{r,q,\mu _{1},\nu _{1}}^{\mu _{2},\nu _{2}}(f;\xi )-f(\xi
)|\leq \frac{\nu _{2}}{[r+1]_{q}}|f(\xi )|+2\left( \frac{[r+1]_{q}+\nu _{2}}{%
[r+1]_{q}}\right) _{q}\tilde{\omega}\left( f;\sqrt{\tilde{\delta}_{r,q,\mu
_{1},\nu _{1}}^{\mu _{2},\nu _{2}}(\xi )}\right) .
\end{equation*}
\end{proof}

\begin{theorem}
\label{ttm-3} For all $f\in E_{f}$ and $\xi \in \mathcal{J}_{r}$ operators %
\eqref{d-1} verify the inequality
\begin{equation*}
|\mathcal{B}_{r,q,\mu _{1},\nu _{1}}^{\mu _{2},\nu _{2}}(f;\xi )-f(\xi
)|\leq \frac{\nu _{2}}{[r+1]_{q}}R_{r}+2\left( \frac{[r+1]_{q}+\nu _{2}}{%
[r+1]_{q}}\right) _{q}\tilde{\omega}\left( f;\sqrt{\tilde{\delta}}\right) ,
\end{equation*}%
where $R_{r}=\max_{\xi \in \mathcal{J}_{r}}|f(\xi )|$ and $\tilde{\delta}%
=\max_{\xi \in \mathcal{J}_{r}}\tilde{\delta}_{r,q,\mu _{1},\nu _{1}}^{\mu
_{2},\nu _{2}}(\xi )$.
\end{theorem}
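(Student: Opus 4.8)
The plan is to obtain this uniform estimate by taking the supremum of each term in the pointwise bound already proved in Theorem \ref{ttm-2}. That earlier theorem gives, for every fixed $\xi\in\mathcal{J}_{r}$ and $f\in E_{f}$,
\begin{equation*}
|\mathcal{B}_{r,q,\mu _{1},\nu _{1}}^{\mu _{2},\nu _{2}}(f;\xi )-f(\xi )|\leq \frac{\nu _{2}}{[r+1]_{q}}|f(\xi )|+2\left( \frac{[r+1]_{q}+\nu _{2}}{[r+1]_{q}}\right) _{q}\tilde{\omega}\left( f;\sqrt{\tilde{\delta}_{r,q,\mu _{1},\nu _{1}}^{\mu _{2},\nu _{2}}(\xi )}\right) .
\end{equation*}
Since the left-hand side of the desired inequality is the same quantity, it suffices to dominate the two terms on the right by expressions independent of $\xi$, and this is exactly what the definitions $R_{r}=\max_{\xi\in\mathcal{J}_{r}}|f(\xi)|$ and $\tilde{\delta}=\max_{\xi\in\mathcal{J}_{r}}\tilde{\delta}_{r,q,\mu _{1},\nu _{1}}^{\mu _{2},\nu _{2}}(\xi)$ are tailored to achieve.

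First I would handle the first term: because $f\in E_{f}\subseteq \tilde{C}[0,1]$ is continuous and $\mathcal{J}_{r}$ is a compact interval, $|f|$ attains its maximum $R_{r}$ on $\mathcal{J}_{r}$, so $|f(\xi)|\leq R_{r}$ for all $\xi\in\mathcal{J}_{r}$, whence $\frac{\nu _{2}}{[r+1]_{q}}|f(\xi)|\leq \frac{\nu _{2}}{[r+1]_{q}}R_{r}$. For the second term I would invoke the monotonicity of the modulus of continuity in its second argument: since $\tilde{\omega}(f;\cdot)$ is nondecreasing and $\tilde{\delta}_{r,q,\mu _{1},\nu _{1}}^{\mu _{2},\nu _{2}}(\xi)\leq \tilde{\delta}$ for every $\xi\in\mathcal{J}_{r}$, we get $\sqrt{\tilde{\delta}_{r,q,\mu _{1},\nu _{1}}^{\mu _{2},\nu _{2}}(\xi)}\leq \sqrt{\tilde{\delta}}$ and therefore $\tilde{\omega}\bigl(f;\sqrt{\tilde{\delta}_{r,q,\mu _{1},\nu _{1}}^{\mu _{2},\nu _{2}}(\xi)}\bigr)\leq \tilde{\omega}\bigl(f;\sqrt{\tilde{\delta}}\bigr)$. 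Substituting both bounds into the pointwise estimate yields the claimed inequality.

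The main obstacle here is essentially bookkeeping rather than genuine difficulty: the only facts that must be verified are the monotonicity of $\tilde{\omega}(f;\cdot)$, which is immediate from its definition \eqref{snson1} as a supremum over increasing $\tilde{\delta}$-neighbourhoods, and the attainment of both maxima $R_{r}$ and $\tilde{\delta}$ on $\mathcal{J}_{r}$. The latter follows from continuity of $f$ together with continuity of the central-moment function $\xi\mapsto \tilde{\delta}_{r,q,\mu _{1},\nu _{1}}^{\mu _{2},\nu _{2}}(\xi)$, which is a polynomial in $\xi$ by Lemma \ref{snlm1}, over the compact set $\mathcal{J}_{r}$. Once these are in place, no new estimate is required and the result drops out directly from Theorem \ref{ttm-2}.
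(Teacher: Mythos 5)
Your proposal is correct and follows essentially the same route as the paper, whose entire proof is the one-line remark that the result follows from Theorem \ref{ttm-2} by the monotonicity of the modulus of continuity; you have simply spelled out the supremum-taking and the attainment of the maxima that the paper leaves implicit.
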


\begin{proof}
In the view of monotonicity of the modulus of continuity, we easily led to
get desired result.
\end{proof}

\begin{remark}
\label{rmk-1} For all $\xi \in \mathcal{J}_{r}$ Theorem \ref{ttm-2}
estimates the local order approximation, while Theorem \ref{ttm-3} allows to
estimate the global order approximation.
\end{remark}

\begin{theorem}
\label{ttm-4} If $\varphi \in C^{\prime }[0,1]$ then for every $\xi \in
\mathcal{J}_{r},$ we have\newline
\newline
$|\mathcal{B}_{r,q,\mu _{1},\nu _{1}}^{\mu _{2},\nu _{2}}(\varphi ;\xi
)-\varphi (\xi )|$
\begin{eqnarray*}
&\leq &\frac{\nu _{2}}{[r+1]_{q}}|\varphi (\xi )|+\Bigg{|}\bigg{[}\frac{2q}{%
[2]_{q}}\frac{[r]_{q}}{[r+1]_{q}+\nu _{1}}\left( \frac{[r+1]_{q}+\nu _{2}}{%
[r+1]_{q}}\right) _{q}^{2}-\left( \frac{[r+1]_{q}+\nu _{2}}{[r+1]_{q}}%
\right) _{q}\bigg{]}\xi \\
&+&\bigg{[}\frac{1+2\mu _{1}}{[2]_{q}([r+1]_{q}+\nu _{1})}\left( \frac{%
[r+1]_{q}+\nu _{2}}{[r+1]_{q}}\right) _{q} \\
&-&\frac{2q\mu _{2}}{[2]_{q}}\frac{[r]_{q}}{([r+1]_{q}+\nu
_{1})([r+1]_{q}+\nu _{2})}\left( \frac{[r+1]_{q}+\nu _{2}}{[r+1]_{q}}\right)
_{q}^{2}\bigg{]}\Bigg{|}|\varphi ^{\prime }(\xi )| \\
&+&2\left( \frac{[r+1]_{q}+\nu _{2}}{[r+1]_{q}}\right) _{q}\sqrt{\tilde{%
\delta}_{r,q,\mu _{1},\nu _{1}}^{\mu _{2},\nu _{2}}}\tilde{\omega}\left(
\varphi ^{\prime };\sqrt{\tilde{\delta}_{r,q,\mu _{1},\nu _{1}}^{\mu
_{2},\nu _{2}}}(\xi )\right) ,
\end{eqnarray*}%
where $\tilde{\delta}_{r,q,\mu _{1},\nu _{1}}^{\mu _{2},\nu _{2}}$ is
defined in Theorem \ref{ttm-2}.
\end{theorem}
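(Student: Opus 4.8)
The plan is to specialize part (2) of Theorem \ref{ttm-1} to the positive linear operators $\mathcal{B}_{r,q,\mu_1,\nu_1}^{\mu_2,\nu_2}$ acting from $C[0,1]$ to $C(\mathcal{J}_r)$, taking $f=\varphi\in C'[0,1]$, and then to substitute the moment data already computed in Lemmas \ref{sn} and \ref{snlm1}. Written out, part (2) of Theorem \ref{ttm-1} supplies
\begin{align*}
|\mathcal{B}_{r,q,\mu_1,\nu_1}^{\mu_2,\nu_2}(\varphi;\xi)-\varphi(\xi)|
&\leq |\varphi(\xi)|\,\big|\mathcal{B}_{r,q,\mu_1,\nu_1}^{\mu_2,\nu_2}(\eta_0(t);\xi)-1\big|
+|\varphi'(\xi)|\,\big|\mathcal{B}_{r,q,\mu_1,\nu_1}^{\mu_2,\nu_2}(\eta_1(t)-\xi;\xi)\big| \\
&\quad+\sqrt{\mathcal{B}_{r,q,\mu_1,\nu_1}^{\mu_2,\nu_2}((\eta_1(t)-\xi)^2;\xi)}\,
\Big\{\sqrt{\mathcal{B}_{r,q,\mu_1,\nu_1}^{\mu_2,\nu_2}(\eta_0(t);\xi)}
+\tfrac{1}{\tilde{\delta}}\sqrt{\mathcal{B}_{r,q,\mu_1,\nu_1}^{\mu_2,\nu_2}((\eta_1(t)-\xi)^2;\xi)}\Big\}\tilde{\omega}(\varphi';\tilde{\delta}).
\end{align*}

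First I would dispose of the two leading summands. By Lemma \ref{sn}(1) one has $\mathcal{B}(\eta_0;\xi)=([r+1]_q+\nu_2)/[r+1]_q$, whence $|\mathcal{B}(\eta_0;\xi)-1|=\nu_2/[r+1]_q$, which produces the leading term $\tfrac{\nu_2}{[r+1]_q}|\varphi(\xi)|$. The coefficient of $|\varphi'(\xi)|$ is exactly the first central moment $\mathcal{B}(\eta_1(t)-\xi;\xi)$ of Lemma \ref{snlm1}(1), so transcribing that expression inside absolute-value bars reproduces verbatim the bracketed factor multiplying $|\varphi'(\xi)|$ in the statement.

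The decisive step is to select the free parameter $\tilde{\delta}$ exactly as in the proof of Theorem \ref{ttm-2}, namely
\[
\tilde{\delta}=\sqrt{\tilde{\delta}_{r,q,\mu_1,\nu_1}^{\mu_2,\nu_2}(\xi)}
=\sqrt{\Big(\tfrac{[r+1]_q}{[r+1]_q+\nu_2}\Big)_q\,
\mathcal{B}_{r,q,\mu_1,\nu_1}^{\mu_2,\nu_2}((\eta_1(t)-\xi)^2;\xi)}.
\]
Abbreviating $M_0=\mathcal{B}(\eta_0;\xi)$ and $M_2=\mathcal{B}((\eta_1(t)-\xi)^2;\xi)$, the defining relation for $\tilde{\delta}_{r,q,\mu_1,\nu_1}^{\mu_2,\nu_2}$ reads simply $M_2=M_0\,\tilde{\delta}_{r,q,\mu_1,\nu_1}^{\mu_2,\nu_2}(\xi)$, so that $\tfrac{1}{\tilde{\delta}}\sqrt{M_2}=\sqrt{M_2/\tilde{\delta}_{r,q,\mu_1,\nu_1}^{\mu_2,\nu_2}(\xi)}=\sqrt{M_0}$ and the brace collapses to $2\sqrt{M_0}$. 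Consequently the last summand becomes $2\sqrt{M_0\,M_2}\,\tilde{\omega}(\varphi';\sqrt{\tilde{\delta}_{r,q,\mu_1,\nu_1}^{\mu_2,\nu_2}(\xi)})$, and using $M_2=M_0\,\tilde{\delta}_{r,q,\mu_1,\nu_1}^{\mu_2,\nu_2}(\xi)$ again gives $\sqrt{M_0\,M_2}=M_0\sqrt{\tilde{\delta}_{r,q,\mu_1,\nu_1}^{\mu_2,\nu_2}(\xi)}$. Inserting $M_0=([r+1]_q+\nu_2)/[r+1]_q$ then yields precisely the third term of the statement.

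I expect no genuine obstacle beyond bookkeeping: the estimate is a direct transcription of Theorem \ref{ttm-1}(2) once the moments of Lemmas \ref{sn} and \ref{snlm1} are inserted and the same $\tilde{\delta}$ as in Theorem \ref{ttm-2} is chosen. The single point that demands care is that the factor standing in front of the brace is the \emph{square root} $\sqrt{\mathcal{B}((\eta_1(t)-\xi)^2;\xi)}$ of the second central moment; it is exactly this half power, paired with the matching choice of $\tilde{\delta}$, that forces $\sqrt{\tilde{\delta}_{r,q,\mu_1,\nu_1}^{\mu_2,\nu_2}(\xi)}$ to appear both as a multiplier and as the argument of $\tilde{\omega}$, so that the orders balance. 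I would also record that $\varphi\in C'[0,1]$ guarantees $\varphi'\in C[0,1]$, so $\tilde{\omega}(\varphi';\cdot)$ is well defined and tends to $0$, which is precisely the hypothesis required to apply part (2) of Theorem \ref{ttm-1}.
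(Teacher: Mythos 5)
Your proposal is correct and follows essentially the same route as the paper: apply part (2) of Theorem \ref{ttm-1} with the moments from Lemma \ref{sn} and Lemma \ref{snlm1}, then choose $\tilde{\delta}=\sqrt{\tilde{\delta}_{r,q,\mu _{1},\nu _{1}}^{\mu _{2},\nu _{2}}(\xi )}$ exactly as in Theorem \ref{ttm-2}. You in fact supply the final algebraic step (the brace collapsing to $2\sqrt{M_{0}}$ and $\sqrt{M_{0}M_{2}}=M_{0}\sqrt{\tilde{\delta}_{r,q,\mu _{1},\nu _{1}}^{\mu _{2},\nu _{2}}(\xi )}$) that the paper leaves as ``we get our results easily'', and your square-root prefactor $\sqrt{\mathcal{B}_{r,q,\mu _{1},\nu _{1}}^{\mu _{2},\nu _{2}}((\eta _{1}(t)-\xi )^{2};\xi )}$ matches the standard Shisha--Mond form that the paper itself uses inside its proof, rather than the un-rooted prefactor as misprinted in the statement of Theorem \ref{ttm-1}(2).
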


\begin{proof}
Takin into consideration (2) of Theorem \ref{ttm-1} and Lemma \ref{snlm1},
we get\newline
\newline
$|\mathcal{B}_{r,q,\mu _{1},\nu _{1}}^{\mu _{2},\nu _{2}}(\varphi ;\xi
)-\varphi (\xi )|$
\begin{eqnarray*}
&\leq &\frac{\nu _{2}}{[r+1]_{q}}|\varphi (\xi )|+\Bigg{|}\bigg{[}\frac{2q}{%
[2]_{q}}\frac{[r]_{q}}{[r+1]_{q}+\nu _{1}}\left( \frac{[r+1]_{q}+\nu _{2}}{%
[r+1]_{q}}\right) _{q}^{2}-\left( \frac{[r+1]_{q}+\nu _{2}}{[r+1]_{q}}%
\right) _{q}\bigg{]}\xi \\
&+&\bigg{[}\frac{1+2\mu _{1}}{[2]_{q}([r+1]_{q}+\nu _{1})}\left( \frac{%
[r+1]_{q}+\nu _{2}}{[r+1]_{q}}\right) _{q} \\
&-&\frac{2q\mu _{2}}{[2]_{q}}\frac{[r]_{q}}{([r+1]_{q}+\nu
_{1})([r+1]_{q}+\nu _{2})}\left( \frac{[r+1]_{q}+\nu _{2}}{[r+1]_{q}}\right)
_{q}^{2}\bigg{]}\Bigg{|}|\varphi ^{\prime }(\xi )| \\
&+&\sqrt{\mathcal{B}_{r,q,\mu _{1},\nu _{1}}^{\mu _{2},\nu _{2}}((\eta
_{1}(t)-\xi )^{2};\xi )}\sqrt{\mathcal{B}_{r,q,\mu _{1},\nu _{1}}^{\mu
_{2},\nu _{2}}((\eta _{0}(z);\xi )}\bigg{\{}1+\frac{1}{\tilde{\delta}}\frac{%
\sqrt{\mathcal{B}_{r,q,\mu _{1},\nu _{1}}^{\mu _{2},\nu _{2}}((\eta
_{1}(t)-\xi )^{2};\xi )}}{\sqrt{\mathcal{B}_{r,q,\mu _{1},\nu _{1}}^{\mu
_{2},\nu _{2}}((\eta _{0}(\xi );\xi )}}\bigg{\}}\tilde{\omega}\left( \varphi
^{\prime };\tilde{\delta}\right) .
\end{eqnarray*}%
In the view of theorem \ref{ttm-2} if we choose $\tilde{\delta}=\sqrt{\tilde{%
\delta}_{r,q,\mu _{1},\nu _{1}}^{\mu _{2},\nu _{2}}(\xi )}=\sqrt{\left(
\frac{[r+1]_{q}}{[r+1]_{q}+\nu _{2}}\right) _{q}}\sqrt{\mathcal{B}_{r,q,\mu
_{1},\nu _{1}}^{\mu _{2},\nu _{2}}((\eta _{1}(t)-\xi )^{2};\xi )},$ then we
get our results easily.
\end{proof}

By the virtue of some earlier information we want to emphasize in the
development to study approximation by positive linear operators \eqref{d-1}
in $L_{p}$ spaces.

In our main instruments we use space $L_{p}(\mathcal{J}_{r})$ of integral
modification in terms of modulus of continuity for all $\psi \in L_{p}(\Phi
_{\lambda })$ by
\begin{equation}
\tilde{\omega}_{1,p}(\psi ,t)=\sup_{z\in \lbrack 0,1]}\sup_{0<\lambda \leq
t}\parallel \psi (z+\lambda )-\psi (z)\parallel _{L_{p}(\Phi _{\lambda
})},\quad (1\leq p<\infty ),
\end{equation}%
where $\parallel .\parallel _{L_{p}(\Phi _{\lambda })}$ is the $L_{p}$-norm
defined over $\Phi _{\lambda }=[0,1-\lambda ]$. In addition to measure the
quantitative estimates by the Peetre's $K$-functional we let $\psi $ be
absolutely continuous function and $\mathcal{F}_{1,p}(\Phi _{\lambda
})=\{\psi ,\psi ^{\prime }\in L_{p}(\Phi _{\lambda })\}$. For any $\varphi
\in L_{p}(\Phi _{\lambda })$ and $1\leq p<\infty ,$ the Peetre's $K$%
-functional is given by

\begin{equation}
K_{1,p}(\psi ;t)=\inf_{\varphi \in \mathcal{F}_{1,p}(\Phi _{\lambda })}%
\bigg{(}\parallel \psi -\varphi \parallel _{L_{p}(\Phi _{\lambda
})}+t\parallel \varphi ^{\prime }\parallel _{L_{p}(\Phi _{\lambda })}\bigg{)}%
.
\end{equation}%
Next, the connection between the Peetre's $K$-functional and integral
modulus of continuity is given by the inequality \cite{jon} as follows
\begin{equation}
M_{1}\tilde{\omega}_{1,p}(\psi ;t)\leq K_{1,p}(\psi ;t)\leq M_{2}\tilde{%
\omega}_{1,p}(\psi ;t).
\end{equation}%
We denote $\mathcal{J}_{s}=\bigg{[}\frac{[s]_{q}+\mu _{1}}{[r+1]_{q}+\nu _{1}%
},\frac{[s+1]_{q}+\mu _{1}}{[r+1]_{q}+\nu _{1}}\bigg{]}$ and taking into
account the operators \eqref{d-1}, let us consider the following auxiliary
operators
\begin{equation}
\mathcal{D}_{r,q,\mu _{1},\nu _{1}}^{\mu _{2},\nu _{2}}(f;\xi )=\left( \frac{%
[r+1]_{q}}{[r+1]_{q}+\nu _{2}}\right) _{q}\mathcal{B}_{r,q,\mu _{1},\nu
_{1}}^{\mu _{2},\nu _{2}}(f;\xi ).  \label{st-0}
\end{equation}

\begin{theorem}
\label{st-1} For all $\varphi \in \mathcal{F}_{1,p}[0,1]$ and $p>1,$
operators $\mathcal{D}_{r,q,\mu _{1},\nu _{1}}^{\mu _{2},\nu _{2}}$ %
\eqref{st-0} satisfy the inequality

\begin{equation*}
\left\vert \left\vert \mathcal{D}_{r,q,\mu _{1},\nu _{1}}^{\mu _{2},\nu
_{2}}(\varphi ;\xi )-\varphi (\xi )\right\vert \right\vert _{L_{p}(\mathcal{J%
}_{s})}\leq 2^{\frac{1}{p}}\left( 1+\frac{1}{p-1}\right) \max_{\xi \in
\mathcal{J}_{s}}\bigg{(}\mathcal{D}_{r,q,\mu _{1},\nu _{1}}^{\mu _{2},\nu
_{2}}\left( (\eta _{1}(t)-\xi )^{2};\xi \right) \bigg{)}^{\frac{1}{2}%
}\left\vert \left\vert \varphi ^{\prime }\right\vert \right\vert
_{L_{p}[0,1]}
\end{equation*}%
where $\mathcal{D}_{r,q,\mu _{1},\nu _{1}}^{\mu _{2},\nu _{2}}\left( \eta
_{1}(t)-\xi )^{2};\xi \right) $ is defined by \eqref{st-0}.
\end{theorem}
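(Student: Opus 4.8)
The plan is to exploit that the auxiliary operator $\mathcal{D}_{r,q,\mu_1,\nu_1}^{\mu_2,\nu_2}$ is a normalized averaging operator. Combining the normalisation \eqref{st-0} with part $(1)$ of Lemma \ref{sn} gives $\mathcal{D}_{r,q,\mu_1,\nu_1}^{\mu_2,\nu_2}(\eta_0;\xi)=1$, so $\mathcal{D}_{r,q,\mu_1,\nu_1}^{\mu_2,\nu_2}(\cdot;\xi)$ acts by integration against a probability kernel and reproduces constants. Since $\varphi\in\mathcal{F}_{1,p}$ is absolutely continuous, the fundamental theorem of calculus yields $\varphi(t)-\varphi(\xi)=\int_\xi^t\varphi'(u)\,\mathrm{d}u$, and by linearity together with $\mathcal{D}(\eta_0;\xi)=1$ I would first write
\[
\mathcal{D}_{r,q,\mu_1,\nu_1}^{\mu_2,\nu_2}(\varphi;\xi)-\varphi(\xi)=\mathcal{D}_{r,q,\mu_1,\nu_1}^{\mu_2,\nu_2}\!\left(\int_\xi^{t}\varphi'(u)\,\mathrm{d}u;\,\xi\right).
\]
Positivity of the operator then gives the pointwise majorant $|\mathcal{D}(\varphi;\xi)-\varphi(\xi)|\le \mathcal{D}\bigl(\bigl|\int_\xi^{t}\varphi'(u)\,\mathrm{d}u\bigr|;\xi\bigr)$.

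Next I would apply Hölder's inequality with the conjugate exponents $p$ and $p'=\tfrac{p}{p-1}=1+\tfrac1{p-1}$ to the inner integral, estimating $\bigl|\int_\xi^t\varphi'\bigr|\le |t-\xi|^{1/p'}\bigl(\int_{I(\xi,t)}|\varphi'(u)|^p\,\mathrm{d}u\bigr)^{1/p}$, where $I(\xi,t)$ is the interval with endpoints $\xi$ and $t$. This is exactly where the factor $1+\tfrac1{p-1}$ in the statement is produced. Inserting this into the majorant and separating the weight $|t-\xi|^{1/p'}$ from the $\varphi'$-term by a further Hölder step against the averaging kernel, the weight collapses into the first absolute moment $\mathcal{D}(|\eta_1(t)-\xi|;\xi)$, which by the Cauchy--Schwarz inequality (the kernel being a probability kernel) is dominated by $\big(\mathcal{D}((\eta_1(t)-\xi)^2;\xi)\big)^{1/2}$. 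Replacing this by $\max_{\xi\in\mathcal{J}_s}\big(\mathcal{D}((\eta_1-\xi)^2;\xi)\big)^{1/2}$ accounts for the square root of the central second moment appearing in the bound; that moment is the quantity computed in Lemma \ref{snlm1}$(2)$ scaled by \eqref{st-0}.

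Finally, to pass from the pointwise estimate to the $L_p(\mathcal{J}_s)$-norm I would raise to the $p$-th power, integrate over $\xi\in\mathcal{J}_s$, pull the maximal second moment out of the integral, and then interchange the order of integration (Fubini--Tonelli) in the remaining iterated integral $\int_{\mathcal{J}_s}\mathcal{D}\bigl(\int_{I(\xi,t)}|\varphi'(u)|^p\,\mathrm{d}u;\xi\bigr)\,\mathrm{d}\xi$ so as to reassemble $\int_0^1|\varphi'(u)|^p\,\mathrm{d}u=\|\varphi'\|_{L_p[0,1]}^p$. The key point is that each $u$ is covered by the domains $I(\xi,t)$ a uniformly bounded number of times --- essentially at most twice --- which is what furnishes the constant $2$ inside the $p$-th power and hence the factor $2^{1/p}$. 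Taking $p$-th roots assembles the three factors into the claimed inequality.

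I expect the genuinely delicate step to be this last aggregation: justifying the Fubini interchange when both the averaging kernel and the inner domain $I(\xi,t)$ depend on $\xi$, and controlling the overlap multiplicity of the intervals $I(\xi,t)$ by $2$ so that the constant comes out exactly as $2^{1/p}\big(1+\tfrac1{p-1}\big)$ rather than a larger or $r$-dependent constant. Reconciling the Hölder and Cauchy--Schwarz exponents so that the second moment appears precisely to the power $\tfrac12$ while $\varphi'$ appears precisely to the power $1$ is the part that requires the most care.
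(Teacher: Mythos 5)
Your proposal diverges from the paper at the decisive step, and the divergence introduces genuine gaps. The paper's proof does \emph{not} use H\"older on the inner integral at all: it bounds $\int_{\xi}^{t}|\varphi'(\zeta)|\,\mathrm{d}\zeta\le \Theta_{\varphi'}(\xi)\,|t-\xi|$, where $\Theta_{\varphi'}$ is the Hardy--Littlewood majorant of $\varphi'$, applies Cauchy--Schwarz to the operator to get the pointwise bound $|\mathcal{D}(\varphi;\xi)-\varphi(\xi)|\le \Theta_{\varphi'}(\xi)\max_{\xi}\big(\mathcal{D}((\eta_1(t)-\xi)^2;\xi)\big)^{1/2}$, and then invokes the Hardy--Littlewood maximal theorem $\|\Theta_{\varphi'}\|_{L_p}^{p}\le 2\left(\tfrac{p}{p-1}\right)^{p}\|\varphi'\|_{L_p}^{p}$ for $p>1$. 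Both constants in the statement come from this last inequality: $2^{1/p}$ and $\tfrac{p}{p-1}=1+\tfrac{1}{p-1}$. Your assertion that the factor $1+\tfrac{1}{p-1}$ "is produced by H\"older's inequality" cannot be right, since H\"older carries constant $1$; without the maximal-function theorem there is no mechanism in your argument that generates $p/(p-1)$.

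The remaining steps of your route also do not close as described. After your two H\"older applications the first-moment factor appears to the power $1/p'=(p-1)/p$, so Cauchy--Schwarz turns it into the second moment to the power $\tfrac{p-1}{2p}$, not $\tfrac12$; to recover the missing half power you would need the Fubini step to contribute an extra factor of order $\big(\mathcal{D}((\eta_1(t)-\xi)^2;\xi)\big)^{1/2p}$, e.g.\ via a Chebyshev estimate on the measure of $\{|t-\xi|\ge|u-\xi|\}$, which you have not supplied. Finally, the "overlap multiplicity at most $2$" claim is unjustified: the kernel $\mathcal{Q}_{r,q,\mu_1,\nu_1}^{\mu_2,\nu_2}(\xi)$ is a global Bernstein-type weight and the intervals $I(\xi,t)$ with $t$ ranging over the full collection of nodes cover each point $u$ many times, so the covering constant is not $2$ and would in any case not reproduce $2^{1/p}\big(1+\tfrac{1}{p-1}\big)$. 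In short, the missing idea is the Hardy--Littlewood majorant and its $L_p$ bound; with it the proof is three lines, and without it your exponent bookkeeping and your constant both fail.
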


\begin{proof}
We consider
\begin{equation}
\mathcal{Q}_{r,q,\mu _{1},\nu _{1}}^{\mu _{2},\nu _{2}}(\xi )=\left( \frac{%
[r+1]_{q}+\nu _{2}}{[r+1]_{q}}\right) _{q}^{r}\left[
\begin{array}{c}
r \\
s%
\end{array}%
\right] _{q}\left( \xi -\frac{\mu _{2}}{[r+1]_{q}+\nu _{2}}\right)
_{q}^{s}\left( \frac{[r+1]_{q}+\mu _{2}}{[r+1]_{q}+\nu _{2}}-\xi \right)
_{q}^{r-s}  \label{etn-1}
\end{equation}

For any $\xi \in \mathcal{J}_{s},$ we can write
\begin{eqnarray*}
\left\vert \mathcal{D}_{r,q,\mu _{1},\nu _{1}}^{\mu _{2},\nu _{2}}(\varphi
;\xi )-\varphi (\xi )\right\vert &=&([r+1]_{q}+\nu _{1})\left\vert
\sum_{s=0}^{r}\mathcal{Q}_{r,q,\mu _{1},\nu _{1}}^{\mu _{2},\nu _{2}}(\xi
)\int_{\mathcal{J}_{s}}\left( \varphi (t)-\varphi (\xi )\right) \mathrm{d}%
t\right\vert \\
&\leq &([r+1]_{q}+\nu _{1})\sum_{s=0}^{r}\mathcal{Q}_{r,q,\mu _{1},\nu
_{1}}^{\mu _{2},\nu _{2}}(\xi )\int_{\mathcal{J}_{s}}\int_{\xi
}^{t}\left\vert \varphi ^{\prime }(\zeta )\right\vert \mathrm{d}\zeta
\mathrm{d}t \\
&\leq &\Theta _{\varphi ^{\prime }}(\xi )([r+1]_{q}+\nu _{1})\sum_{s=0}^{r}%
\mathcal{Q}_{r,q,\mu _{1},\nu _{1}}^{\mu _{2},\nu _{2}}(\xi )\int_{\mathcal{J%
}_{s}}\left\vert t-\xi \right\vert \mathrm{d}t,
\end{eqnarray*}%
where $\Theta _{\varphi ^{\prime }}(\xi )=\sup_{t\in \lbrack 0,1]}\frac{1}{%
t-\xi }\int_{z}^{t}\left\vert \varphi ^{\prime }(\zeta )\right\vert \mathrm{d%
}\lambda $ $(t\neq \xi )$ denotes the Hardy-Littlewood majorant of $\varphi
^{\prime }$. By the virtue of the well-known Cauchy-Schwarz's inequality, we
immediately get
\begin{eqnarray*}
\left\vert \mathcal{D}_{r,q,\mu _{1},\nu _{1}}^{\mu _{2},\nu _{2}}(\varphi
;\xi )-\varphi (\xi )\right\vert &\leq &\Theta _{\varphi ^{\prime }}(\xi
)([r+1]_{q}+\nu _{1})^{\frac{1}{2}}\left( \sum_{s=0}^{r}\mathcal{Q}_{r,q,\mu
_{1},\nu _{1}}^{\mu _{2},\nu _{2}}(\xi )\right) ^{\frac{1}{2}} \\
&\times &\bigg{(}\sum_{s=0}^{r}\mathcal{Q}_{r,q,\mu _{1},\nu _{1}}^{\mu
_{2},\nu _{2}}(\xi )\int_{\mathcal{J}_{s}}\left( \eta _{1}(t)-\xi \right)
^{2}\mathrm{d}t\bigg{)}^{\frac{1}{2}} \\
&\leq &\Theta _{\varphi ^{\prime }}(z)\max_{z\in \mathcal{J}_{s}}\bigg{(}%
\mathcal{D}_{r,q,\mu _{1},\nu _{1}}^{\mu _{2},\nu _{2}}\left( (\eta
_{1}(t)-\xi )^{2};\xi \right) \bigg{)}^{\frac{1}{2}},
\end{eqnarray*}

For $1<p<\infty,$ the Hardy-Littlewood theorem \cite{ygm} gives the
inequality

\begin{equation*}
\int_{0}^{1}\Theta _{\varphi ^{\prime }}(\xi )\mathrm{d}\xi \leq 2\left(
\frac{p}{p-1}\right) ^{p}\int_{0}^{1}\left\vert \varphi ^{\prime }(\xi
)\right\vert ^{p}\mathrm{d}z.
\end{equation*}%
Thus we get

\begin{equation*}
\left\vert \left\vert \mathcal{D}_{r,q,\mu _{1},\nu _{1}}^{\mu _{2},\nu
_{2}}(\varphi )-\varphi \right\vert \right\vert _{L_{p}(\mathcal{J}%
_{s})}\leq 2^{\frac{1}{p}}\left( 1+\frac{1}{p-1}\right) \max_{z\in \mathcal{J%
}_{s}}\bigg{(}\mathcal{D}_{r,q,\mu _{1},\nu _{1}}^{\mu _{2},\nu _{2}}\left(
(\eta _{1}(t)-\xi )^{2};\xi \right) \bigg{)}^{\frac{1}{2}}\left\vert
\left\vert \varphi ^{\prime }\right\vert \right\vert _{L_{p}[0,1]}
\end{equation*}
\end{proof}

\begin{theorem}
\label{stt-1} For all $\varphi \in L_{p}[0,1],$ operators \eqref{st-0}
satisfy
\begin{equation*}
\left\vert \left\vert \mathcal{D}_{r,q,\mu _{1},\nu _{1}}^{\mu _{2},\nu
_{2}}(\varphi )-\varphi \right\vert \right\vert _{L_{p}(\mathcal{J}%
_{s})}\leq 2M_{2}\bigg{(}1+2^{\frac{1-p}{p}}\left( 1+\frac{1}{p-1}\right)
\tilde{\omega}_{1,p}\left( \varphi ;\rho _{r,q,\mu _{1},\nu _{1}}^{\mu
_{2},\nu _{2}}(\xi )\right) \bigg{)},
\end{equation*}%
where $M_{2}$ is positive constant and $\rho \xi _{r,q,\mu _{1},\nu
_{1}}^{\mu _{2},\nu _{2}}(\xi )=\max_{\xi \in \mathcal{J}_{s}}\bigg{(}%
\mathcal{D}_{r,q,\mu _{1},\nu _{1}}^{\mu _{2},\nu _{2}}\left( (\eta
_{1}(t)-\xi )^{2};\xi \right) \bigg{)}^{\frac{1}{2}}$ by Theorem \ref{st-1}.
\end{theorem}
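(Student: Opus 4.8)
The plan is to run the standard Peetre $K$-functional argument, using Theorem \ref{st-1} to control the smooth part of $\varphi$ and the $L_p$-boundedness of $\mathcal{D}_{r,q,\mu _{1},\nu _{1}}^{\mu _{2},\nu _{2}}$ to control the rough part. First I would record that $\mathcal{D}_{r,q,\mu _{1},\nu _{1}}^{\mu _{2},\nu _{2}}$ is a contraction in the relevant $L_p$-norm. Indeed, the chain of estimates produced in the proof of Theorem \ref{sm-1} yields $\|\mathcal{B}_{r,q,\mu _{1},\nu _{1}}^{\mu _{2},\nu _{2}}(h)\|_{L_p}\le \left(\frac{[r+1]_{q}+\nu _{2}}{[r+1]_{q}}\right)_{q}\|h\|_{L_{p}[0,1]}$, and since $\mathcal{D}_{r,q,\mu _{1},\nu _{1}}^{\mu _{2},\nu _{2}}=\left(\frac{[r+1]_{q}}{[r+1]_{q}+\nu _{2}}\right)_{q}\mathcal{B}_{r,q,\mu _{1},\nu _{1}}^{\mu _{2},\nu _{2}}$ by \eqref{st-0}, the normalizing factor cancels and leaves $\|\mathcal{D}_{r,q,\mu _{1},\nu _{1}}^{\mu _{2},\nu _{2}}(h)\|_{L_p}\le \|h\|_{L_{p}[0,1]}$.

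Next, for an arbitrary $\varphi \in L_{p}[0,1]$ and any $g\in \mathcal{F}_{1,p}$, I would split by the triangle inequality
\begin{equation*}
\|\mathcal{D}_{r,q,\mu _{1},\nu _{1}}^{\mu _{2},\nu _{2}}\varphi -\varphi \|\le \|\mathcal{D}_{r,q,\mu _{1},\nu _{1}}^{\mu _{2},\nu _{2}}(\varphi -g)\|+\|\varphi -g\|+\|\mathcal{D}_{r,q,\mu _{1},\nu _{1}}^{\mu _{2},\nu _{2}}g-g\|.
\end{equation*}
The first term is bounded by $\|\varphi -g\|$ via the contraction property just established, while the third is exactly the situation covered by Theorem \ref{st-1}, giving $\|\mathcal{D}_{r,q,\mu _{1},\nu _{1}}^{\mu _{2},\nu _{2}}g-g\|\le 2^{1/p}\left(1+\frac{1}{p-1}\right)\rho _{r,q,\mu _{1},\nu _{1}}^{\mu _{2},\nu _{2}}(\xi )\,\|g^{\prime }\|_{L_{p}[0,1]}$. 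Combining these gives
\begin{equation*}
\|\mathcal{D}_{r,q,\mu _{1},\nu _{1}}^{\mu _{2},\nu _{2}}\varphi -\varphi \|\le 2\|\varphi -g\|+2^{1/p}\Big(1+\tfrac{1}{p-1}\Big)\rho _{r,q,\mu _{1},\nu _{1}}^{\mu _{2},\nu _{2}}(\xi )\,\|g^{\prime }\|_{L_{p}[0,1]}.
\end{equation*}

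Taking the infimum over $g\in \mathcal{F}_{1,p}$ and factoring out the common $2$, the right-hand side becomes $2\,K_{1,p}\big(\varphi ;2^{(1-p)/p}(1+\tfrac{1}{p-1})\rho \big)$, since $\tfrac12 2^{1/p}=2^{(1-p)/p}$. I would then invoke the $K$-functional scaling estimate $K_{1,p}(\varphi ;\lambda t)\le (1+\lambda )K_{1,p}(\varphi ;t)$ with $\lambda =2^{(1-p)/p}(1+\tfrac{1}{p-1})$ to pull the argument of the $K$-functional back to $\rho $, followed by the equivalence $K_{1,p}(\varphi ;t)\le M_{2}\tilde{\omega}_{1,p}(\varphi ;t)$, arriving at the bound $2M_{2}\big(1+2^{(1-p)/p}(1+\tfrac{1}{p-1})\big)\tilde{\omega}_{1,p}(\varphi ;\rho )$.

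I expect the principal obstacle to be the bookkeeping of the constants rather than any deep analytic difficulty: the clean derivation places $\tilde{\omega}_{1,p}(\varphi ;\rho )$ as a factor multiplying the whole constant $1+2^{(1-p)/p}(1+\tfrac{1}{p-1})$, so matching the printed statement requires correcting the apparent misplacement of the closing parenthesis there (otherwise a non-vanishing additive $2M_2$ term would remain, which cannot be right for a convergence estimate). A secondary point to handle with care is the compatibility of the integration domains, namely ensuring that the contraction estimate (derived over $\mathcal{J}_{r}$) and the application of Theorem \ref{st-1} (stated over $\mathcal{J}_{s}$) are invoked over the same interval, so that the triangle inequality above is applied consistently.
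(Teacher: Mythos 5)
Your argument is essentially the paper's own proof: the authors record the same two ingredients (the uniform $L_p$ bound $\Vert \mathcal{D}_{r,q,\mu _{1},\nu _{1}}^{\mu _{2},\nu _{2}}(\varphi )-\varphi \Vert \leq 2\Vert \varphi \Vert$ for general $\varphi$ and the Theorem \ref{st-1} estimate for $\psi \in \mathcal{F}_{1,p}$), split $\varphi$ through an arbitrary smooth $\psi$, recognize the result as $2K_{1,p}\big(\varphi ;2^{(1-p)/p}(1+\tfrac{1}{p-1})\rho \big)$, and pass to the modulus to land on $2M_{2}\big\{1+2^{(1-p)/p}(1+\tfrac{1}{p-1})\big\}\tilde{\omega}_{1,p}(\varphi ;\rho )$ exactly as you do, the only (immaterial) difference being that they apply the scaling property to $\tilde{\omega}_{1,p}$ after invoking $K_{1,p}\leq M_{2}\tilde{\omega}_{1,p}$ rather than to the $K$-functional first. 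Your diagnosis of the misplaced closing parenthesis in the printed statement is confirmed by the paper's own final displayed inequality.
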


\begin{proof}
We consider
\begin{equation}
\left\vert \left\vert \mathcal{D}_{r,q,\mu _{1},\nu _{1}}^{\mu _{2},\nu
_{2}}(\varphi )-\varphi \right\vert \right\vert _{L_{p}[0,1]}\leq
\begin{cases}
2\left\vert \left\vert \varphi \right\vert \right\vert _{L_{p}[0,1]} & \quad
\text{if }\;\varphi \in L_{p}[0,1], \\
2^{\frac{1}{p}}\left( \frac{p}{p-1}\right) \rho _{r,q,\mu _{1},\nu
_{1}}^{\mu _{2},\nu _{2}}(\xi )\left\vert \left\vert \varphi \right\vert
\right\vert _{L_{p}[0,1]} & \quad \text{if }\;\varphi \in \mathcal{F}%
_{1,p}[0,1],%
\end{cases}
\label{tot}
\end{equation}%
where as in Theorem \ref{st-1} we suppose $\rho _{r,q,\mu _{1},\nu
_{1}}^{\mu _{2},\nu _{2}}(\xi )=\max_{\xi \in \mathcal{J}_{s}}\bigg{(}%
\mathcal{D}_{r,q,\mu _{1},\nu _{1}}^{\mu _{2},\nu _{2}}\left( (\eta
_{1}(t)-\xi )^{2};\xi \right) \bigg{)}^{\frac{1}{2}}$.\newline

Thus for an arbitrary function $\psi\in \mathcal{F}_{1,p}[0,1]$ operators %
\label{tot} verify that

\begin{eqnarray*}
&&\left\vert \left\vert \mathcal{D}_{r,q,\mu _{1},\nu _{1}}^{\mu _{2},\nu
_{2}}(\varphi )-\varphi \right\vert \right\vert _{L_{p}(\mathcal{J}_{s})} \\
&\leq &2\bigg{\{}\left\vert \left\vert \varphi -\psi \right\vert \right\vert
_{L_{p}[0,1]}+2^{\frac{1-p}{p}}\left( 1+\frac{1}{p-1}\right) \rho _{r,q,\mu
_{1},\nu _{1}}^{\mu _{2},\nu _{2}}(\xi )\left\vert \left\vert \psi ^{\prime
}\right\vert \right\vert _{L_{p}[0,1]}\bigg{\}} \\
&\leq &2K_{1,p}\bigg{\{}\varphi ;2^{\frac{1-p}{p}}\left( 1+\frac{1}{p-1}%
\right) \rho _{r,q,\mu _{1},\nu _{1}}^{\mu _{2},\nu _{2}}(\xi )\bigg{\}} \\
&\leq &2M_{2}\tilde{\omega}_{1,p}\bigg{\{}\varphi ;2^{\frac{1-p}{p}}\left( 1+%
\frac{1}{p-1}\right) \rho _{r,q,\mu _{1},\nu _{1}}^{\mu _{2},\nu _{2}}(\xi )%
\bigg{\}} \\
&\leq &2M_{2}\bigg{\{}1+2^{\frac{1-p}{p}}\left( 1+\frac{1}{p-1}\right) %
\bigg{\}}\tilde{\omega}_{1,p}\left( \varphi ;\rho _{r,q,\mu _{1},\nu
_{1}}^{\mu _{2},\nu _{2}}(\xi )\right) .
\end{eqnarray*}
\end{proof}

Now we give the local direct estimate for the operators $\mathcal{D}%
_{r,q,\mu _{1},\nu _{1}}^{\mu _{2},\nu _{2}}$ defined by \eqref{st-0} via
the well-known Lipschitz-type maximal function involving the parameters $%
\alpha ,\;\beta >0$ and number $\sigma \in (0,1]$. Thus from \cite{ozaraktu}
we recall that
\begin{equation*}
Lip_{K}^{(\alpha ,\beta )}(\sigma ):=\Big\{f\in C[0,1]:|f(t)-f(\xi )|\leq K%
\frac{|t-\xi |^{\sigma }}{(\alpha \xi ^{2}+\beta \xi +t)^{\frac{\sigma }{2}}}%
;~z,\;t\in \lbrack 0,1]\Big\},
\end{equation*}%
where $K$ is a positive constant .

\begin{theorem}
\label{local} For any $f\in Lip_{K}^{(\alpha ,\beta )}(\sigma )$ and $\sigma
\in (0,1]$, there exits positive $K$ such that
\begin{equation*}
|\mathcal{D}_{r,q,\mu _{1},\nu _{1}}^{\mu _{2},\nu _{2}}\left( f;\xi \right)
-f(\xi )|\leq K.(\alpha \xi ^{2}+\beta \xi )^{-\sigma /2}\left[ \mathcal{D}%
_{r,q,\mu _{1},\nu _{1}}^{\mu _{2},\nu _{2}}((\eta _{1}(t)-\xi )^{2};\xi )%
\right] ^{\frac{\sigma }{2}},
\end{equation*}%
where $\mathcal{D}_{r,q,\mu _{1},\nu _{1}}^{\mu _{2},\nu _{2}}\left( \eta
_{1}(t)-\xi )^{2};\xi \right) $ defined by \eqref{st-0}.
\end{theorem}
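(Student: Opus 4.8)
The plan is to exploit the fact that the auxiliary operator $\mathcal{D}_{r,q,\mu_1,\nu_1}^{\mu_2,\nu_2}$ reproduces constants. Combining part $(1)$ of Lemma \ref{sn} with the definition \eqref{st-0} gives
\begin{equation*}
\mathcal{D}_{r,q,\mu_1,\nu_1}^{\mu_2,\nu_2}(\eta_0(t);\xi)=\left(\frac{[r+1]_q}{[r+1]_q+\nu_2}\right)_q\left(\frac{[r+1]_q+\nu_2}{[r+1]_q}\right)_q=1.
\end{equation*}
Since $\mathcal{D}_{r,q,\mu_1,\nu_1}^{\mu_2,\nu_2}$ is positive and linear (both properties pass from $\mathcal{B}_{r,q,\mu_1,\nu_1}^{\mu_2,\nu_2}$ up to multiplication by the positive scalar above), reproduction of constants lets me write
\begin{equation*}
|\mathcal{D}_{r,q,\mu_1,\nu_1}^{\mu_2,\nu_2}(f;\xi)-f(\xi)|=\left|\mathcal{D}_{r,q,\mu_1,\nu_1}^{\mu_2,\nu_2}\big(f(t)-f(\xi);\xi\big)\right|\leq \mathcal{D}_{r,q,\mu_1,\nu_1}^{\mu_2,\nu_2}\big(|f(t)-f(\xi)|;\xi\big).
\end{equation*}

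Next I would insert the hypothesis $f\in Lip_K^{(\alpha,\beta)}(\sigma)$. For every integration node $t\in[0,1]$ one has $t\geq 0$, hence $(\alpha\xi^2+\beta\xi+t)^{\sigma/2}\geq(\alpha\xi^2+\beta\xi)^{\sigma/2}$, so the defining inequality simplifies to $|f(t)-f(\xi)|\leq K(\alpha\xi^2+\beta\xi)^{-\sigma/2}|t-\xi|^\sigma$. The factor $(\alpha\xi^2+\beta\xi)^{-\sigma/2}$ is independent of $t$ and pulls outside, leaving
\begin{equation*}
|\mathcal{D}_{r,q,\mu_1,\nu_1}^{\mu_2,\nu_2}(f;\xi)-f(\xi)|\leq K(\alpha\xi^2+\beta\xi)^{-\sigma/2}\,\mathcal{D}_{r,q,\mu_1,\nu_1}^{\mu_2,\nu_2}\big(|\eta_1(t)-\xi|^\sigma;\xi\big).
\end{equation*}

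The last and only substantive step is to dominate the fractional central moment by the quadratic one. Here I would apply H\"older's inequality for the positive linear functional $\mathcal{D}_{r,q,\mu_1,\nu_1}^{\mu_2,\nu_2}(\,\cdot\,;\xi)$ with the conjugate exponents $\tfrac{2}{\sigma}$ and $\tfrac{2}{2-\sigma}$ --- legitimate precisely because $\sigma\in(0,1]$ --- and again use $\mathcal{D}_{r,q,\mu_1,\nu_1}^{\mu_2,\nu_2}(\eta_0(t);\xi)=1$:
\begin{equation*}
\mathcal{D}_{r,q,\mu_1,\nu_1}^{\mu_2,\nu_2}\big(|\eta_1(t)-\xi|^\sigma;\xi\big)\leq\Big[\mathcal{D}_{r,q,\mu_1,\nu_1}^{\mu_2,\nu_2}\big((\eta_1(t)-\xi)^2;\xi\big)\Big]^{\sigma/2}\Big[\mathcal{D}_{r,q,\mu_1,\nu_1}^{\mu_2,\nu_2}(\eta_0(t);\xi)\Big]^{(2-\sigma)/2}.
\end{equation*}
Since the second bracket equals $1$, substituting back into the previous display produces exactly the claimed estimate. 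The main pitfall is bookkeeping rather than genuine difficulty: one must pin down the H\"older exponents correctly and verify the normalization $\mathcal{D}_{r,q,\mu_1,\nu_1}^{\mu_2,\nu_2}(\eta_0;\xi)=1$, on which both the reduction to $\mathcal{D}_{r,q,\mu_1,\nu_1}^{\mu_2,\nu_2}(|f(t)-f(\xi)|;\xi)$ and the H\"older step rest; everything else is a direct substitution.
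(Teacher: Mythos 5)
Your proposal is correct and follows essentially the same route as the paper: reduce to $\mathcal{D}_{r,q,\mu _{1},\nu _{1}}^{\mu _{2},\nu _{2}}(|f(t)-f(\xi )|;\xi )$ using positivity and the normalization $\mathcal{D}_{r,q,\mu _{1},\nu _{1}}^{\mu _{2},\nu _{2}}(\eta _{0};\xi )=1$, discard $t$ from the denominator via $(\alpha \xi ^{2}+\beta \xi +t)^{-\sigma /2}\leq (\alpha \xi ^{2}+\beta \xi )^{-\sigma /2}$, and finish with H\"{o}lder's inequality for the exponents $2/\sigma $ and $2/(2-\sigma )$. The only (cosmetic) difference is that you treat all $\sigma \in (0,1]$ in one stroke, whereas the paper first handles $\sigma =1$ by Cauchy--Schwarz and then $\sigma \in (0,1)$ by H\"{o}lder.
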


\begin{proof}
For any $f\in Lip_{K}^{(\alpha ,\beta )}(\sigma )$ and $\sigma \in (0,1],$
first we check the statement holds for $\sigma =1$. Then, in conclusion %
\eqref{etn-1} we can see that
\begin{align*}
|\mathcal{D}_{r,q,\mu _{1},\nu _{1}}^{\mu _{2},\nu _{2}}\left( f;\xi \right)
-f(\xi )|& \leq |\mathcal{D}_{r,q,\mu _{1},\nu _{1}}^{\mu _{2},\nu
_{2}}(|f(t)-f(\xi )|;\xi )|+f(\xi )~|\mathcal{D}_{r,q,\mu _{1},\nu
_{1}}^{\mu _{2},\nu _{2}}(\eta _{0}(t);\xi )-1| \\
& \leq ([r+1]_{q}+\nu _{1})\sum_{s=0}^{r}\bigg|f\left( t\right) -f(\xi )%
\bigg|~\mathcal{Q}_{r,q,\mu _{1},\nu _{1}}^{\mu _{2},\nu _{2}}(\xi ) \\
& \leq K([r+1]_{q}+\nu _{1})\sum_{s=0}^{r}\frac{|t-z|}{(\alpha z^{2}+\beta
z+t)^{\frac{1}{2}}}~\mathcal{Q}_{r,q,\mu _{1},\nu _{1}}^{\mu _{2},\nu
_{2}}(\xi ).
\end{align*}%
For any $\alpha ,\;\beta \geq 0,$ we use the inequality $(\alpha z^{2}+\beta
z+t)^{-1/2}\leq (\alpha z^{2}+\beta z)^{-1/2}$ and apply the well-known
Cauchy-Schwarz inequality, we get
\begin{align*}
|\mathcal{D}_{r,q,\mu _{1},\nu _{1}}^{\mu _{2},\nu _{2}}\left( f;\xi \right)
-f(\xi )|& \leq K([r+1]_{q}+\nu _{1})(\alpha z^{2}+\beta
z)^{-1/2}\sum_{s=0}^{r}|t-z|~\mathcal{Q}_{r,q,\mu _{1},\nu _{1}}^{\mu
_{2},\nu _{2}}(\xi ) \\
& =K.(\alpha z^{2}+\beta z)^{-1/2}|\mathcal{D}_{r,q,\mu _{1},\nu _{1}}^{\mu
_{2},\nu _{2}}(\eta _{1}(t)-\xi ;\xi )| \\
& \leq K\big{|}\mathcal{D}_{r,q,\mu _{1},\nu _{1}}^{\mu _{2},\nu _{2}}((\eta
_{1}(t)-\xi )^{2};\xi )\big{|}^{1/2}(\alpha \xi ^{2}+\beta \xi )^{-1/2}.
\end{align*}%
These conclusions imply that it is true for $\sigma =1$. Now we want to show
the statement is valid for $\eta \in (0,1)$. We apply the monotonicity
property to operators $\mathcal{D}_{r,q,\mu _{1},\nu _{1}}^{\mu _{2},\nu
_{2}}$ and use the H\"{o}lder's inequality two times with $c=2/\sigma $ and $%
d=2/(2-\sigma )$. Thus, we get
\begin{eqnarray*}
\left\vert \mathcal{D}_{r,q,\mu _{1},\nu _{1}}^{\mu _{2},\nu _{2}}\left(
f;\xi \right) -f(\xi )\right\vert &\leq &([r+1]_{q}+\nu _{1})\sum_{s=0}^{r}%
\bigg|f\left( t\right) -f(\xi )\bigg|~\mathcal{Q}_{r,q,\mu _{1},\nu
_{1}}^{\mu _{2},\nu _{2}}(\xi ) \\
&\leq &\bigg(\sum_{s=0}^{r}\bigg|f\left( t\right) -f(\xi )\bigg|^{\frac{2}{%
\sigma }}([r+1]_{q}+\nu _{1})\mathcal{Q}_{r,q,\mu _{1},\nu _{1}}^{\mu
_{2},\nu _{2}}(\xi )\bigg)^{\frac{\sigma }{2}} \\
&\times &\bigg(\sum_{s=0}^{r}([r+1]_{q}+\nu _{1})\mathcal{Q}_{r,q,\mu
_{1},\nu _{1}}^{\mu _{2},\nu _{2}}(\xi )\bigg)^{\frac{2-\sigma }{2}} \\
&\leq &K\bigg(\sum_{s=0}^{r}\frac{\big(t-z\big)^{2}([r+1]_{q}+\nu _{1})%
\mathcal{Q}_{r,q,\mu _{1},\nu _{1}}^{\mu _{2},\nu _{2}}(\xi )}{t+\alpha \xi
^{2}+\beta \xi }\bigg)^{\frac{\sigma }{2}} \\
&\leq &K(\alpha \xi ^{2}+\beta \xi )^{-\sigma /2}\bigg\{\sum_{s=0}^{r}\big(%
t-\xi \big)^{2}~([r+1]_{q}+\nu _{1})\mathcal{Q}_{r,q,\mu _{1},\nu _{1}}^{\mu
_{2},\nu _{2}}(\xi )\bigg\}^{\frac{\sigma }{2}} \\
&\leq &K(\alpha \xi ^{2}+\beta \xi )^{-\sigma /2}\left[ \mathcal{D}_{r,q,\mu
_{1},\nu _{1}}^{\mu _{2},\nu _{2}}((\eta _{1}(t)-\xi )^{2};\xi )\right] ^{%
\frac{\sigma }{2}}.
\end{eqnarray*}%
This completes the proof.
\end{proof}

Here, we establish a quantitative Voronovskaja-type theorem for the
operators $\mathcal{D}_{r,q,\mu _{1},\nu _{1}}^{\mu _{2},\nu _{2}}\left(
f;\xi \right) $. For any $f\in C[0,1]$, the modulus of smoothness is defined
by
\begin{equation}
\omega _{\beta }(f,\tilde{\delta})=\sup_{0<|\rho |\leq \tilde{\delta}}\bigg\{%
\bigg|f\bigg(\xi +\frac{\rho \beta (\xi )}{2}\bigg)-f\bigg(\xi -\frac{\rho
\beta (\xi )}{2}\bigg)\bigg|,\xi \pm \frac{\rho \beta (\xi )}{2}\in \lbrack
0,1]\bigg\},  \label{smooth}
\end{equation}%
where $\beta (\xi )=(\xi -\xi ^{2})^{\frac{1}{2}}$ and related Peetre's $K$%
-functional is given as
\begin{equation*}
\mathcal{K}_{\beta }(f,\tilde{\delta})=\inf_{g\in \mathcal{W}_{\beta }[0,1]}%
\big\{||f-g||+\tilde{\delta}||\beta g^{\prime }||:g\in C^{1}[0,1],\tilde{%
\delta}>0\big\},
\end{equation*}%
and $\mathcal{W}_{\beta }[0,1]=\{g:~g\in C_{A}[0,1],~\Vert \beta g^{\prime
}\Vert <\infty \}$. Let $C_{A}[0,1]$ be the class of absolutely continuous
functions defined on $[0,1]$. There is a positive constant $\mathcal{C}$
such that
\begin{equation*}
\mathcal{K}_{\beta }(f,\tilde{\delta})\leq \mathcal{C}~\omega _{\beta }(f,%
\tilde{\delta}).
\end{equation*}

\begin{theorem}
\label{thm10} Let $f,f^{\prime },f^{\prime \prime }\in C[0,1],$ then for
every $\xi \in \lbrack 0,1]$ it follows that
\begin{eqnarray*}
&&\bigg|\mathcal{D}_{r,q,\mu _{1},\nu _{1}}^{\mu _{2},\nu _{2}}\left( f;\xi
\right) -f(\xi )-\mathcal{D}_{r,q,\mu _{1},\nu _{1}}^{\mu _{2},\nu
_{2}}(\eta _{1}(t)-\xi ;\xi )f^{\prime }(\xi )-\frac{f^{\prime \prime }(\xi )%
}{2}\big(\mathcal{D}_{r,q,\mu _{1},\nu _{1}}^{\mu _{2},\nu _{2}}((\eta
_{1}(t)-\xi )^{2};\xi )+1\bigg| \\
&\leq &\mathcal{C}.O\left( \frac{1}{[r]_{q}}\right) \lambda ^{2}(\xi )\omega
_{\lambda }\bigg(f^{\prime \prime },[r]_{q}^{-\frac{1}{2}}\bigg),
\end{eqnarray*}%
where $\mathcal{D}_{r,q,\mu _{1},\nu _{1}}^{\mu _{2},\nu _{2}}$ is defined
by \eqref{st-0} and $\lambda (\xi )=\xi +1$.
\end{theorem}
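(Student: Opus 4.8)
The plan is to combine a second-order Taylor expansion of $f$ with the Peetre $K$-functional $\mathcal{K}_\lambda$ and the moment information already available. Throughout write $\mathcal{D}$ for $\mathcal{D}_{r,q,\mu_1,\nu_1}^{\mu_2,\nu_2}$. The first observation is that $\mathcal{D}$ reproduces constants: by \eqref{st-0} and part $(1)$ of Lemma \ref{sn},
\[
\mathcal{D}(\eta_0;\xi)=\left(\frac{[r+1]_q}{[r+1]_q+\nu_2}\right)_q\left(\frac{[r+1]_q+\nu_2}{[r+1]_q}\right)_q=1 .
\]
Taylor's theorem with integral remainder gives
\[
f(t)=f(\xi)+(t-\xi)f'(\xi)+\frac{(t-\xi)^2}{2}f''(\xi)+\int_{\xi}^{t}(t-u)\,[f''(u)-f''(\xi)]\,\mathrm{d}u .
\]
Applying the linear, positive operator $\mathcal{D}$ term by term and using $\mathcal{D}(\eta_0;\xi)=1$, the polynomial part is reproduced exactly, so that the whole bracketed expression on the left of the assertion reduces to $\mathcal{D}(\mathcal{R}(\cdot,\xi);\xi)$, where $\mathcal{R}(t,\xi)=\int_{\xi}^{t}(t-u)[f''(u)-f''(\xi)]\,\mathrm{d}u$. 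Thus everything comes down to estimating $|\mathcal{D}(\mathcal{R}(\cdot,\xi);\xi)|$.

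The next step is to introduce the $K$-functional. For an arbitrary $g\in C^1[0,1]$ with $\lambda g'\in C[0,1]$, split $f''(u)-f''(\xi)=\big([f''-g](u)-[f''-g](\xi)\big)+\big(g(u)-g(\xi)\big)$. The first bracket contributes at most $\|f''-g\|\,(t-\xi)^2$ after integrating against $(t-u)$. For the second, since $\lambda(s)=s+1$ satisfies $1\le\lambda(s)\le 2$ on $[0,1]$, one has $|g(u)-g(\xi)|\le\|\lambda g'\|\,\big|\int_\xi^u\lambda(s)^{-1}\mathrm{d}s\big|\le\|\lambda g'\|\,|u-\xi|$, whence the second integral is bounded by $\tfrac16\|\lambda g'\|\,|t-\xi|^3$. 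Monotonicity and positivity of $\mathcal{D}$ then yield
\[
\big|\mathcal{D}(\mathcal{R}(\cdot,\xi);\xi)\big|\le\|f''-g\|\,\mathcal{D}\big((\eta_1(t)-\xi)^2;\xi\big)+\tfrac{1}{6}\|\lambda g'\|\,\mathcal{D}\big(|\eta_1(t)-\xi|^3;\xi\big).
\]

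What remains is purely a matter of moments. From Lemma \ref{snlm1} together with \eqref{st-0} one reads off that $\mathcal{D}((\eta_1(t)-\xi)^2;\xi)$ is of order $1/[r]_q$, and since $\lambda(\xi)\ge 1$ this can be recorded in the form $C\,[r]_q^{-1}\lambda^2(\xi)$ uniformly on $[0,1]$. For the cubic moment I would apply the Cauchy--Schwarz inequality, $\mathcal{D}(|\eta_1(t)-\xi|^3;\xi)\le\big(\mathcal{D}((\eta_1(t)-\xi)^2;\xi)\big)^{1/2}\big(\mathcal{D}((\eta_1(t)-\xi)^4;\xi)\big)^{1/2}$, reducing matters to the fourth central moment; the bound $\mathcal{D}((\eta_1(t)-\xi)^4;\xi)=O(1/[r]_q^{2})$ then gives $\mathcal{D}(|\eta_1(t)-\xi|^3;\xi)=O([r]_q^{-3/2})$, again dominated by $[r]_q^{-1/2}\cdot C[r]_q^{-1}\lambda^2(\xi)$. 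Substituting these two estimates produces
\[
\big|\mathcal{D}(\mathcal{R}(\cdot,\xi);\xi)\big|\le C\,[r]_q^{-1}\lambda^2(\xi)\Big(\|f''-g\|+[r]_q^{-1/2}\|\lambda g'\|\Big),
\]
and taking the infimum over $g$ turns the parenthesis into $\mathcal{K}_\lambda(f'',[r]_q^{-1/2})$. Finally the equivalence $\mathcal{K}_\lambda(f'',\tilde\delta)\le\mathcal{C}\,\omega_\lambda(f'',\tilde\delta)$ recorded before the theorem converts this into the claimed $\mathcal{C}\cdot O(1/[r]_q)\,\lambda^2(\xi)\,\omega_\lambda\big(f'',[r]_q^{-1/2}\big)$.

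The step I expect to be the genuine obstacle is the fourth central moment estimate $\mathcal{D}((\eta_1(t)-\xi)^4;\xi)=O(1/[r]_q^2)$, since Lemma \ref{snlm1} supplies only the first and second central moments. I would therefore first compute the raw moments $\mathcal{B}(\eta_3;\xi)$ and $\mathcal{B}(\eta_4;\xi)$ by repeating the $q$-Jackson integral evaluation of Lemma \ref{sn} for $\alpha=3,4$, then assemble the fourth central moment and track its exact order in $[r]_q$. Securing the precise $\lambda^2(\xi)$-dependence uniformly in $\xi\in[0,1]$ — rather than merely some $O(1/[r]_q)$ bound — is the delicate bookkeeping that makes the quantitative constant emerge in exactly the asserted form.
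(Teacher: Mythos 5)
Your proposal follows essentially the same route as the paper: Taylor's formula with integral remainder, a split through an auxiliary $g$ that is bounded by the second and fourth central moments (the cubic moment handled via Cauchy--Schwarz), and then the infimum over $g$ producing $\mathcal{K}_{\lambda}(f'',[r]_q^{-1/2})$ followed by the comparison with $\omega_{\lambda}$. The fourth-central-moment bound $O(1/[r]_q^{2})\lambda^{4}(\xi)$ that you single out as the genuine obstacle is likewise only asserted, not computed, in the paper's proof, so your outline contains every ingredient the authors actually supply.
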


\begin{proof}
For any $f\in C[0,1]$ we consider
\begin{equation*}
f(t)-f(\xi )-(t-\xi )f^{\prime }(\xi )=\int_{z}^{t}(t-\gamma )f^{\prime
\prime }(\gamma )\mathrm{d}\gamma
\end{equation*}%
Therefore, we can write
\begin{equation*}
f(t)-f(\xi )-(t-\xi )f^{\prime }(\xi )-\frac{f^{\prime \prime }(\xi )}{2}%
\big((t-\xi )^{2}+1\big)\leq \int_{z}^{t}(t-\gamma )[f^{\prime \prime
}(\gamma )-f^{\prime \prime }(\xi )]\mathrm{d}\gamma .
\end{equation*}%
On applying the operators $\mathcal{D}_{r,q,\mu _{1},\nu _{1}}^{\mu _{2},\nu
_{2}}\left( f;\xi \right) $, we obtain
\begin{align}
& \bigg|\mathcal{D}_{r,q,\mu _{1},\nu _{1}}^{\mu _{2},\nu _{2}}\left( f;\xi
\right) -f(\xi )-\mathcal{D}_{r,q,\mu _{1},\nu _{1}}^{\mu _{2},\nu
_{2}}(\eta _{1}(t)-\xi ;\xi )f^{\prime }(\xi )  \notag \\
& -\frac{f^{\prime \prime }(\xi )}{2}\big(\mathcal{D}_{r,q,\mu _{1},\nu
_{1}}^{\mu _{2},\nu _{2}}((\eta _{1}(t)-\xi )^{2};\xi )+\mathcal{D}_{r,q,\mu
_{1},\nu _{1}}^{\mu _{2},\nu _{2}}(\eta _{0}(t);\xi )\big)\bigg| \\
& \leq \mathcal{D}_{r,q,\mu _{1},\nu _{1}}^{\mu _{2},\nu _{2}}\bigg(\Big|%
\int_{z}^{t}|t-\gamma |~|f^{\prime \prime }(\gamma )-f^{\prime \prime }(\xi
)|~\mathrm{d}\gamma \Big|;z\bigg).
\end{align}%
We can estimate the right hand side expression such as
\begin{equation*}
\bigg|\int_{z}^{t}|t-\gamma |~|f^{\prime \prime }(\gamma )-f^{\prime \prime
}(\xi )|~\mathrm{d}\gamma \bigg|\leq 2\Vert f^{\prime \prime }-g\Vert (t-\xi
)^{2}+2\Vert \lambda g^{\prime }\Vert \lambda ^{-1}(\xi )|t-\xi |^{3}.
\end{equation*}%
We easily conclude that
\begin{equation*}
\mathcal{D}_{r,q,\mu _{1},\nu _{1}}^{\mu _{2},\nu _{2}}((\eta _{1}(t)-\xi
)^{2};\xi )\leq O\left( \frac{1}{[r]_{q}}\right) \lambda ^{2}(\xi )\;\text{%
and}\;\mathcal{D}_{r,q,\mu _{1},\nu _{1}}^{\mu _{2},\nu _{2}}((\eta
_{1}(t)-\xi )^{4};\xi )\leq O\left( \frac{1}{[r]_{q}^{2}}\right) \lambda
^{4}(\xi ),
\end{equation*}%
where $\lambda (\xi )=\xi +1$. From Cauchy-Schwarz inequality, we get
\begin{align*}
& \bigg|\mathcal{D}_{r,q,\mu _{1},\nu _{1}}^{\mu _{2},\nu _{2}}\left( f;\xi
\right) -f(\xi )-\mathcal{D}_{r,q,\mu _{1},\nu _{1}}^{\mu _{2},\nu
_{2}}(\eta _{1}(t)-\xi ;\xi )f^{\prime }(\xi ) \\
& -\frac{f^{\prime \prime }(\xi )}{2}\big(\mathcal{D}_{r,q,\mu _{1},\nu
_{1}}^{\mu _{2},\nu _{2}}((\eta _{1}(t)-\xi )^{2};\xi )+\mathcal{D}_{r,q,\mu
_{1},\nu _{1}}^{\mu _{2},\nu _{2}}(\eta _{0}(t);\xi )\big)\bigg| \\
& \leq 2\Vert f^{\prime \prime }-g\Vert \mathcal{D}_{r,q,\mu _{1},\nu
_{1}}^{\mu _{2},\nu _{2}}((\eta _{1}(t)-\xi )^{2};\xi )+2\Vert \lambda
g^{\prime }\Vert \lambda ^{-1}(\xi )\mathcal{D}_{r,q,\mu _{1},\nu _{1}}^{\mu
_{2},\nu _{2}}(|\eta _{1}(t)-\xi |^{3};\xi ) \\
& \leq 2.O\left( \frac{1}{[r]_{q}}\right) \gamma ^{2}(\xi )\Vert f^{\prime
\prime }-g\Vert \\
& +2\Vert \lambda g^{\prime }\Vert \lambda ^{-1}(\xi )\{\mathcal{D}_{r,q,\mu
_{1},\nu _{1}}^{\mu _{2},\nu _{2}}((\eta _{1}(t)-\xi )^{2};\xi )\}^{1/2}\{%
\mathcal{D}_{r,q,\mu _{1},\nu _{1}}^{\mu _{2},\nu _{2}}((\eta _{1}(t)-\xi
)^{4};\xi )\}^{1/2} \\
& \leq 2.O\left( \frac{1}{[r]_{q}}\right) \lambda ^{2}(\xi )\Big\{\Vert
f^{\prime \prime }-g\Vert +[r]_{q}^{-\frac{1}{2}}\Vert \lambda g^{\prime
}\Vert \Big\}.
\end{align*}%
By taking infimum over all $g\in W_{\lambda }[0,1]$, we easily deduce that
\begin{eqnarray*}
&&\bigg|\mathcal{D}_{r,q,\mu _{1},\nu _{1}}^{\mu _{2},\nu _{2}}\left( f;\xi
\right) -f(\xi )-\mathcal{D}_{r,q,\mu _{1},\nu _{1}}^{\mu _{2},\nu
_{2}}(\eta _{1}(t)-\xi ;\xi )f^{\prime }(\xi )-\frac{f^{\prime \prime }(\xi )%
}{2}\big(\mathcal{D}_{r,q,\mu _{1},\nu _{1}}^{\mu _{2},\nu _{2}}((\eta
_{1}(t)-\xi )^{2};\xi +1\bigg| \\
&\leq &\mathcal{C}.O\left( \frac{1}{[r]_{q}}\right) \lambda ^{2}(\xi )\omega
_{\gamma }\bigg(f^{\prime \prime },[r]_{q}^{-\frac{1}{2}}\bigg),
\end{eqnarray*}%
where $\omega _{\lambda }$ is the modulus of smoothness defined by %
\eqref{smooth}. Thus we complete the proof of Theorem \ref{thm10}.
\end{proof}

\section{Conclusion}

It is very clear that, the choice for $q=1$ in the operators \eqref{d-1},
coincide with the recent operators \cite{nfs} and the polynomials of our new
$q$-Bernstein-Stancu-Kantorovich variant of shifted knots operators become
the polynomials of Bernstein-Stancu-Kantorovich polynomials by \cite{nfs}.
Further, in case of $\mu _{2}=\nu _{2}=0$ with $q=1$ then, operators %
\eqref{d-1} reduced to \eqref{mnz-1} by \cite{barbosu}. Moreover, for $\mu
_{1}=\mu _{2}=\nu _{1}=\nu _{2}=0$ with $q=1$ our new operators \eqref{d-1}
coincide with the classic Bernstein-Kantorovich operators by \cite{kant}.
Thus in our investigation, we can say that the operators defined by \cite%
{barbosu,kant,nfs} are special cases of our $q$-Bernstein-Stancu-Kantorovich
variant of shifted knots operators \eqref{d-1}. 
\newline

\bigskip

\parindent=0mm\textbf{Acknowledgements}

\parindent=0mm Not applicable.\newline

\parindent=0mm\textbf{Availability of data and material}

\parindent=0mm Not applicable.\newline

\parindent=0mm\textbf{Competing interests}

\parindent=0mm The authors declare that they have no competing interests.%
\newline

\parindent=0mm\textbf{Funding}

\parindent=0mm Not applicable.\newline

\parindent=0mm\textbf{Authors' contributions}

\parindent=0mm All the authors contributed equally and significantly in
writing this paper.\newline

\end{document}